\newcommand{\sba}[1][n]{\ensuremath{b^{\prime}_{#1}}}
\newcommand{\Field}{\ensuremath{\mathbb{F}}}
\newcommand{\Parameters}{\ensuremath{\delta, \delta_L, \delta_R, \kappa_L, \kappa_R, \kappa}}
\newcommand{\themodule}{\ensuremath{\tensor[_{\sba}]{\mathcal{V}(n)}{}}}
\DeclareMathOperator\algdim{dim}
\DeclareMathOperator\amod{mod}
\DeclareMathOperator\coker{Coker}
\DeclareMathOperator\End{End}
\DeclareMathOperator\Hom{Hom}
\DeclareMathOperator\op{op}
\DeclareMathOperator\res{res}
\DeclareMathOperator\TL{TL}
\newcommand{\iden}{{\rm 1\hspace*{-0.4ex}%
\rule{0.1ex}{1.52ex}\hspace*{0.2ex}}}
\theoremstyle{plain} \newtheorem{theorem}{Theorem}[section]
                     \newtheorem{proposition}[theorem]{Proposition}
                     \newtheorem{lemma}[theorem]{Lemma}
                     \newtheorem{corollary}[theorem]{Corollary}
\theoremstyle{definition} \newtheorem{define}[theorem]{Definition}
                          \newtheorem{remark}[theorem]{Remark}
                          \newtheorem{example}[theorem]{Example}
\title{Tilting Modules for the \\ Symplectic Blob Algebra}
\author{Andrew Reeves}
\begin{document}

\maketitle

\begin{abstract}
Let $\Field$ be an algebraically closed field.  For $n \in \mathbb{N}$ and $\delta, \delta_L, \delta_R, \kappa_L, \kappa_R, \kappa \in \Field$, the symplectic blob algebra $\sba(\delta, \delta_L, \delta_R, \kappa_L, \kappa_R, \kappa)$ is a finite dimensional non-commutative $\Field$-algebra that may be viewed as an extension of the Temperley-Lieb algebra.

In a previous paper, we defined, for any $n \in \mathbb{N}$, a tensor space module $\tensor[_\sba]{\mathcal{V}(n)}{}$.  In this paper we generalise an argument used by Martin and Ryom-Hansen in their study of the (ordinary) blob algebra to show that when $\sba$ is quasihereditary the module $\tensor[_\sba]{\mathcal{V}(n)}{}$ is full-tilting.
\end{abstract}

\section{Introduction}

The symplectic blob algebras are a family of finite dimensional associative algebras introduced in \cite{greenmartinparker2007}.  The study of their representation theory has applications to statistical mechanics \cite{nichols2006} and to the representation theory of the type $\widetilde{C}$ Hecke algebra \cite{degiernichols2009}.

Fix an algebraically closed field $\Field$.  Let $n \in \mathbb{N}$.  Then for $\Parameters \in \Field$, the symplectic blob algebra $\sba = \sba(\Parameters)$ is a finite dimensional $\Field$-algebra with generators $\{e,U_1.U_2. \ldots, U_{n-1}, f\}$ satisfying certain relations (which are given in Definition \ref{define:sbarelations}).  In particular $\sba$ is an extension of the Temperley-Lieb algebra $\TL_n(\delta)$ and of the blob algebra $b_n(\delta,\delta_L,\kappa_L)$.  These algebras are isomorphic to the subalgebras of $\sba$ generated by $\{U_1, U_2, \ldots, U_{n-1}\}$ and by $\{ e, U_1, U_2, \ldots, U_{n-1} \}$ respectively.

If $\Parameters \in \Field^{\times} = \Field \setminus \{ 0 \}$ then the algebra $\sba(\Parameters)$ is quasihereditary.   This means that for each simple module $L$ there is a corresponding standard module $\Delta$ and a corresponding (indecomposable) tilting module $T$ \cite[Appendix C]{dengduparshallwang} \cite[Appendix]{donkin}. 

In \cite{greenmartinparker2007} the authors find a poset $\Lambda_n = \{-n, -(n-1), \ldots, (n-1)\}$ which labels the simple  modules.  As $\sba$ is quasihereditary this set also labels the standard modules and the indecomposable tilting modules.  They give bases for each of the standard modules $\{\Delta_n(\lambda)) \; : \; \lambda \in \Lambda_n \}$.  

Even with this type of information it is often a challenging exercise to explicitly construct tilting modules for a given quasihereditary algebra.  Indeed, even the simple content of the standard modules $\{ \Delta_n(\lambda) \}$ is not known in the non-generic case \cite{greenmartinparker2008}.  This paper considers the closely related problem of constructing \emph{full} tilting modules for the symplectic blob algebra.  (Such a module is a direct sum of the indecomposable tilting modules, with each of these modules appearing as a summand at least once.)  In light of the previous remarks, it might seem difficult to attempt to construct a full-tilting $\sba$-module even for a particular $n \in \mathbb{N}$.  

However, it is known that for both the Temperley-Lieb and blob algebras, it is possible to construct such a full-tilting module for \emph{every} $n \in \mathbb{N}$ \cite{martinryomhansen2004}.  The connections between $\sba$ and these two algebras suggest that a similar construction may work for these algebras as well. In fact, this proves to be the case -- we show that the module $\tensor[_{\sba}]{\mathcal{V}(n)}{}$ constructed in \cite{reeves2011} is a full tilting module for every $n \in \mathbb{N}$.  

In order to do this it is necessary to first prove some preliminary results.  

In section 3 we recall (for example, from \cite{greenmartinparker2008}) the existence of two \emph{localisation functors} $F_e$ and $F_f$ associated with the generators $e$ and $f$.  

\begin{displaymath}
F_e, F_f : \sba-\amod \rightarrow \sba[n-1]-\amod \rm{.}
\end{displaymath}

In Proposition \ref{prop:Viso} of Section 4 we show that $$F_e \left( \tensor[_{\sba}]{\mathcal{V}(n)}{} \right) \cong F_f \left( \tensor[_{\sba}]{\mathcal{V}(n)}{} \right) \cong \tensor[_{\sba[n-1]}]{\mathcal{V}(n-1)}{}$$.  

The left adjoint of $F_e$ is a globalisation functor $G_e$.  In general this functor is not also the right adjoint of $F_e$; the modules $\tensor[_{\sba}]{\mathcal{V}(n)}{}$ and $(G_e \circ F_e) \tensor[_{\sba}]{\mathcal{V}(n)}{}$ are not isomorphic.. In Proposition \ref{prop:adjointinjective} we show that $\tensor[_{\sba}]{\mathcal{V}(n)}{}$ is tilting for all $n \in \mathbb{N}$ provided that the ``adjointness map'' $\psi_n : (G_e \circ F_e) \tensor[_{\sba}]{\mathcal{V}(n)}{} \rightarrow \tensor[_{\sba}]{\mathcal{V}(n)}{}$ is injective for all $n \in \mathbb{N}$.

Having established these results, in Theorem \ref{thm:Vistilting} we show that the module $\tensor[_{\sba}]{\mathcal{V}(n)}{}$ is tilting.  This is done by proving that the map $\psi_n$ is injective for every $n \in \mathbb{N}$.  This requires some new combinatorial sequences and results.

Some additional work is needed to show that the module is actually full-tilting.  In Proposition \ref{prop:permutationmodules} it is noted that the module $\tensor[_{\sba}]{\mathcal{V}(n)}{}$ has a manifest decomposition into ``permutation modules'' $M_n(r)$.  In Lemmas \ref{lem:FonMnr} and \ref{lem:smallestpermutationistilting} it is noted that $F_e$ and $F_f$ respect this decomposition, and that $M_n(2n) \cong L_n(-n) \cong T_n(-n)$.  This leads to the proof of Theorem \ref{thm:fulltilt}: $\tensor[_{\sba}]{\mathcal{V}(n)}{}$ is a full tilting module.

\subsection*{Notation}

Fix the following notation for the rest of the paper.

Let $\Field$ be an algebraically closed field.  Let $\Field^{\times} = \Field \setminus \{ 0 \}$ be the units of this field.

For $q \in \Field^{\times}$ and $n \in \mathbb{N} \setminus {0}$ let
\begin{displaymath}
 [n]_q = \frac{q^{n} - q^{-n}}{q - q^{-1}} \rm{.}
\end{displaymath}

In particular, $[2]_q = q + \frac{1}{q}$.

Let $\uline{\Pi} = \left( \Parameters \right) \in \Field^{6}$.  Then let $\uline{\Pi_L} = (\delta, \kappa_L, \delta_R, \delta_L, \kappa_R, \kappa) $ and let $\uline{\Pi_R} = (\delta, \delta_L, \kappa_R, \kappa_L, \delta_R, \kappa)$.

Let $\uline{\Sigma} = \left( a,b,c,d,x,y,x,z \right) \in \Field^{8}$.  Then let $\uline{\Sigma_L} = (a,b,c,d,\frac{ab}{x},\frac{cd}{y},z,w)$ and let $\uline{\Sigma_R} = (a,b,c,d,x,y,\frac{bc}{z}, \frac{ad}{w})$.

The symplectic blob $\sba = \sba(\uline{\Pi})$ is the $\Field$-algebra described in Definition \ref{define:sbarelations} with generators $\mathcal{G}_n = \{ e, U_1, U_2, \ldots, U_{n-1}, f \}$.

The poset $\Lambda_n = \{ -n, -(n-1), \ldots, (n-2), (n-1) \}$ labels the simple modules $\{L_n(\lambda)\}$ of $\sba$, the standard modules $\{\Delta_n()\}$ of $\sba$ and the indecomposable tilting modules $\{T_n(\lambda)\}$ of $\sba$.

The tensor space module $\tensor[_{\sba}]{\mathcal{V}(n)}{}$ is recalled in Proposition \ref{thm:rep}.  It is shown in Proposition \ref{prop:permutationmodules} that it has a manifest decomposition into a direct sum of ``permutation modules'' $\{M_n(r)\}_{-2n \leq r \leq 2n}$ as defined in Definition \ref{def:permutationmodule}.

If $u \in \{1,2\}^{4n}$ then $\uline{u}$ is a corresponding vector in $\tensor[_{\sba}]{\mathcal{V}(n)}{}$. 

For any $q \in \Field^{\times}$, let 
\begin{displaymath}
 \uline{\widetilde{12}^{q}} = q \uline{12} + \uline{21} \rm{.}
\end{displaymath}

If no ambiguity results, let 
\begin{displaymath}
\uline{x} \otimes \uline{\widetilde{12}^{q}} \otimes \uline{y} = \uline{x \widetilde{12}^{q} y} \rm{.}
\end{displaymath}

\section{The Symplectic Blob Algebra $b^{\prime}_{n}$}

Let $\Field$ be an algebraically closed field.  Let $\Parameters \in \Field$.  Let $n \in \mathbb{N}$.

The following presentation of the symplectic blob algebra $\sba(\Parameters)$ over $\Field$ was given in \cite{greenmartinparker2008}.  The symplectic blob algebra was originally defined (in \cite{greenmartinparker2007}) in terms of certain planar diagrams; this presentation will not be used in this paper.

\begin{define}  \label{define:sbarelations}
Fix $n \in \mathbb{N}$.   Let $\Parameters \in \Field$. The symplectic blob algebra $\sba(\Parameters)$ is the unital, associative $\Field$-algebra with generators $\mathcal{G}_n = \{e, U_1, U_2, \ldots, U_{n-1}, f\}$ satisfying the relations below.

\begin{align}
 U_i^2  &= \delta U_i \qquad &\text{for all } i \label{rel1} \\
 U_i U_j U_i &= U_i \qquad &\text{if } |i-j| = 1 \label{rel9} \\ 
 U_i U_j &= U_j U_I \qquad &\text{if } |i-j| \neq 1 \label{rel8} \\ 
 e^2 &= \delta_L e \label{rel2} \\
 f^2 &= \delta_R f \label{rel3} \\
 U_1 e U_1 &= \kappa_L U_1 \label{rel4} \\
 U_{n-1} f U_{n-1} &= \kappa_R U_{n-1} \label{rel5} \\
 e U_i &= U_i e \qquad &\text{if } i \neq 1 \label{rel10} \\
 f U_i &= U_i f \qquad &\text{if } i \neq n-1 \label{rel11} \\
 ef &= fe \qquad &\text{if } n > 1 \label{rel12} \\
 IJI &= \kappa I \label{rel6} \\
 JIJ &= \kappa J \label{rel7}
\end{align}
where
\begin{equation*}
 I = \left\{ \begin{array}{lr}
              U_1 U_3 \ldots U_{n-2} f & \text{if } n \text{ is odd} \\
              U_1 U_3 \ldots U_{n-1} & \text{if } n \text{ is even}
             \end{array} \right.
\end{equation*}
and
\begin{equation*}
 J = \left\{ \begin{array}{lr}
              e U_2 \ldots U_{n-1} & \text{if } n \text{ is odd} \\
              e U_2 \ldots U_{n-2} f & \text{if } n \text{ is even}
             \end{array} \right. \text{.}
\end{equation*}
\end{define}

Note from the relations that $\sba \cong (\sba)^{\op}$.

It will be useful to define some particular elements in $\sba$.

\begin{define}
Fix $n \in \mathbb{N}$.   Let $\Parameters \in \Field^{\times}$.  

For $i \in \{-n, -(n-1), \ldots, (n-1) \}$, define $g_i \in \sba(\Parameters)$ as follows:
\begin{align}
g_{-n} &= 1 \\
g_{-(n-1)} = f \quad 
g_{(n-1)} &= e \quad
g_{(n-2)} = ef \\
g_{-(m-2)} &= U_{2m-1} g_{-m} \qquad \mbox{for } 3 \leq m \leq n \\
g_{(m-2)} &= U_{2(m-1)} g_{m} \qquad \mbox{for } 3 \leq m < n \\
g_0 &= I \rm{.}
\end{align}
\end{define}

\begin{proposition}  \label{prop:sbafacts}
Fix $n \in \mathbb{N}$.   Let $\Parameters \in \Field^{\times}$.  Then $\sba(\Parameters)$ is quasihereditary, with hereditary chain:
\begin{equation}
0 \subset \sba g_0 \sba \subset \sba g_{-1} \sba \subset \sba g_{1} \sba \subset \sba g_{-2} \sba \ldots \subset \sba g_{-n} \sba = \sba \rm{.}
\end{equation}
The standard modules $\{\Delta_n(\lambda)\}$ are indexed by the poset
\begin{displaymath}
\Lambda_n = \{ -n, -(n-1), -(n-2), \ldots, (n-2), (n-1) \}  \rm{,}
\end{displaymath}
with partial order illustrated by the Hasse diagram:
\begin{displaymath}
 \xymatrix{
& 0 \ar@{-}[dr] \ar@{-}[dl] & \\
-1 \ar@{-}[d] \ar@{-}[drr] && 1 \ar@{-}[d] \ar@{-}[dll] \\
\vdots && \vdots \\
\vdots \ar@{-}[d] \ar@{-}[drr] && \vdots \ar@{-}[d] \ar@{-}[dll] \\
-(n-2) \ar@{-}[d] \ar@{-}[drr] && (n-2) \ar@{-}[d] \ar@{-}[dll] \\
-(n-1) \ar@{-}[dr] && (n-1) \ar@{-}[dl] \\
& -n &
}  \rm{,}
\end{displaymath}
where $0$ is the maximal element and $-n$ is the minimal element.

For any $\lambda \in \Lambda_n$, the dimension of the standard module $\Delta_n(\lambda)$ over $\Field$ is given by
\begin{align}
\algdim_{\Field} \Delta_n(\lambda) &= \sum_{i=0}^{k(n,\lambda)} \begin{pmatrix} n \\ i \end{pmatrix} \rm{,}
\end{align}
where
\begin{displaymath}
k(n,\lambda) = \left\{ \begin{array}{cl}
                       n & \mbox{if } \lambda = 0 \\
                      \frac{n+\lambda}{2} & \mbox{if } \lambda < 0 \mbox{ and } n+\lambda \mbox{ is even} \\
                      \frac{n-\lambda-2}{2} & \mbox{if } \lambda > 0 \mbox{ and } n+\lambda \mbox{ is even} \\
                      \frac{n-|\lambda|-1}{2} & \mbox{otherwise}
                      \end{array} \right. \rm{.}
\end{displaymath}
\end{proposition}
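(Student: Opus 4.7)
The plan is to establish the three claims — the hereditary chain, the Hasse diagram, and the dimension formula — largely by induction on $n$, drawing on explicit basis calculations from \cite{greenmartinparker2007, greenmartinparker2008} and the defining relations in Definition \ref{define:sbarelations}.

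First I would show that each $g_i$ is quasi-idempotent, i.e.\ $g_i^2 = c_i g_i$ for some $c_i \in \Field^\times$ (this is where the hypothesis $\Parameters \in \Field^\times$ is used). The base cases $g_{-n}=1$, $g_{\pm(n-1)}$, $g_{n-2}$ and $g_0$ follow directly from relations \eqref{rel1}--\eqref{rel7} together with the pairwise commutativity \eqref{rel8} of the $U_i$'s appearing inside $I$ and $J$; for example, when $n$ is even, $I = U_1 U_3 \cdots U_{n-1}$ gives $I^2 = \delta^{n/2} I$. For the recursive definitions of the remaining $g_i$, one argues by induction: the freshly introduced $U_k$ factor commutes via \eqref{rel8} with every $U_j$ appearing in the smaller $g$, and meets its neighbours through \eqref{rel9}, yielding a new quasi-idempotent with an updated scalar. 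Quasi-idempotence makes each $\sba g_i \sba$ a two-sided ideal. To upgrade the chain to a hereditary chain I would then identify the localised algebra $g_i \sba g_i$ modulo the previous ideal with a smaller symplectic blob algebra (or a product thereof), so that the hereditary conditions on each successive stratum transfer from the inductive hypothesis.

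The Hasse diagram reflects the partial order on $\Lambda_n$ intrinsic to the quasihereditary structure of $\sba$. Given the explicit bases of the $\Delta_n(\lambda)$ from \cite{greenmartinparker2007}, one can compute how $e$, $f$, and the $U_i$ act on them and read off the covering relations directly from the composition series of the $\Delta_n(\lambda)$. The displayed chain of ideals is then a linear extension of this partial order, with incomparable pairs (such as $-1$ and $1$) laid down in a chosen order compatible with the quasihereditary axiom.

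Finally, for the dimension formula, I would count the standard-module basis from \cite{greenmartinparker2007}, where $\Delta_n(\lambda)$ is spanned by certain half-diagrams distinguished by the number of propagating lines and by left/right boundary decorations. A grouping of these half-diagrams by the position of the innermost non-propagating arc collapses the count to $\sum_{i=0}^{k(n,\lambda)} \binom{n}{i}$; the four cases for $k(n,\lambda)$ correspond to whether $\lambda$ is zero, and to whether the parity of $n+\lambda$ forces a particular boundary decoration on the propagating lines. The main obstacle is the case-by-case combinatorial bookkeeping of parities and decorations, and a useful sanity check is the identity $\sum_\lambda (\algdim_\Field \Delta_n(\lambda))^2 = \algdim_\Field \sba$ known from \cite{greenmartinparker2007}.
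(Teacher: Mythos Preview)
The paper does not actually prove this proposition: its proof consists solely of the sentence ``See Section 8 of \cite{greenmartinparker2007}.'' So there is no argument in the present paper to compare your proposal against.

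Your sketch is a reasonable outline of how the argument in \cite{greenmartinparker2007} proceeds --- quasi-idempotence of the $g_i$, identification of successive strata with smaller algebras, and a diagram-basis count for the standard modules --- and you yourself cite that reference as the source of the explicit bases. In the context of this paper, however, the expected ``proof'' is simply the citation; your proposal goes well beyond what the paper requires. If anything, the only mismatch is one of scope: the paper treats Proposition~\ref{prop:sbafacts} as background imported wholesale from \cite{greenmartinparker2007}, not as something to be re-derived here.
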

\begin{proof}
See Section 8 of \cite{greenmartinparker2007}.
\end{proof}

\section{Homological Properties of the Symplectic Blob Algebra $b^{\prime}_{n}$}

This section defines functors between the module categories $\sba-\amod$ and $\sba[n-1]-\amod$ called \emph{localisation functors}.  Some basic properties of these functors are given.

Let $\Parameters \in \Field^{\times}$.  Let $\uline{\Pi} := (\Parameters)$.  Let $\uline{\Pi_L} := (\delta, \kappa_L, \delta_R, \delta_L, \kappa_R,\kappa)$.  Let $\uline{\Pi_R} := (\delta,\delta_L,\kappa_R,\kappa_L,\delta_R,\delta)$.   

Let $\sba = \sba(\uline{\Pi})$ be the symplectic blob algebra over $\Field$ as defined in Definition \ref{define:sbarelations}.  

Write $\mathcal{F}_n(\Delta)$ for the full subcategory of $\sba-\amod$ containing all modules which have a filtration by standard modules.  Write $\mathcal{F}_n(\nabla)$ for the full subcategory of $\sba-\amod$ containing all modules which have a filtration by costandard modules.

Define two idempotent elements $\bar{e}$ and $\bar{f}$ as follows.
\begin{align}
\bar{e} &= \frac{1}{\delta_L} e \\
\bar{f} &= \frac{1}{\delta_R} f \rm{.}
\end{align}

\begin{proposition}  \label{prop:twoisomorphsisms}
There is an isomorphism:
\begin{align}
\bar{e} \sba(\uline{\Pi}) \bar{e} &\cong \sba[n-1](\uline{\Pi_L}) \rm{,}  \label{eq:eisomorph}
\end{align}
in which
\begin{align}
e &\mapsto 1 \\
eU_1e &\mapsto e \\
eU_ie &\mapsto U_{i-1} \qquad \mbox{for } i \neq 1 \\
efe &\mapsto f \rm{.}
\end{align}
There is also an isomorphism
\begin{align}
\bar{f} \sba(\uline{\Pi}) \bar{f} &\cong \sba[n-1](\uline{\Pi_R}) \rm{,}
\end{align}
in which
\begin{align}
f &\mapsto 1 \\
fU_{n-1}f &\mapsto f \\
fU_{n-i}f &\mapsto U_{(n-1)-i} \qquad \mbox{for } i \neq 1 \\
fef &\mapsto e \rm{.}
\end{align}
\end{proposition}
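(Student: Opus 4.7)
The plan is to construct an explicit $\Field$-algebra homomorphism $\phi:\sba[n-1](\uline{\Pi_L})\to\bar{e}\sba(\uline{\Pi})\bar{e}$, verify that it respects the defining relations of Definition \ref{define:sbarelations} (now with the reshuffled parameter tuple $\uline{\Pi_L}$), and finish by a dimension count to deduce that $\phi$ is an isomorphism. The second statement then follows from the same argument with $\bar{f}$ in place of $\bar{e}$, or via the evident involution of $\sba(\uline{\Pi})$ swapping $e\leftrightarrow f$ and $U_i\leftrightarrow U_{n-i}$.

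On the generators $\{e',U'_1,\ldots,U'_{n-2},f'\}$ of $\sba[n-1](\uline{\Pi_L})$ I would set $\phi(e')=\delta_L\,\bar{e}U_1\bar{e}$, $\phi(U'_i)=\bar{e}U_{i+1}\bar{e}$ for $i=1,\ldots,n-2$, and $\phi(f')=\bar{e}f\bar{e}$; these agree with the correspondence stated in the proposition once one absorbs the identifications $\bar{e}=\delta_L^{-1}e$ and recognises that $\bar{e}$ (not $1$) is the unit of $\bar{e}\sba\bar{e}$. The scalar $\delta_L$ on $\phi(e')$ is forced by the requirement $(e')^2=\kappa_L e'$: using (\ref{rel2}) and (\ref{rel4}), a short computation gives $(\bar{e}U_1\bar{e})^2=(\kappa_L/\delta_L)\bar{e}U_1\bar{e}$, so the extra $\delta_L$ makes $\phi(e')^2=\kappa_L\phi(e')$, matching the value $\kappa_L$ in the second slot of $\uline{\Pi_L}$. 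The remaining quadratic, braid, and commutation relations on the $U'_i$, $e'$, $f'$ then follow from their $\sba(\uline{\Pi})$ counterparts together with the observation that $\bar{e}$ commutes with every $U_i$ for $i\neq 1$ and with $f$ (by relations (\ref{rel10}), (\ref{rel12})). As a representative mixed check, $U'_1 e' U'_1=\delta_L U'_1$ follows by expanding $\phi(U'_1 e' U'_1)=\delta_L\,\bar{e}U_2\bar{e}U_1\bar{e}U_2\bar{e}$ and using $\bar{e}U_2=U_2\bar{e}$ together with $U_2 U_1 U_2=U_2$ (relation (\ref{rel9})) to collapse it to $\delta_L\,\bar{e}U_2\bar{e}=\delta_L\phi(U'_1)$, in agreement with the value $\delta_L$ in the fourth slot of $\uline{\Pi_L}$.

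The hardest step is verifying the relations $I'J'I'=\kappa I'$ and $J'I'J'=\kappa J'$ (\ref{rel6}), (\ref{rel7}) in the image, because the recipes defining $I'$ and $J'$ inside $\sba[n-1]$ use the parity of $n-1$ rather than of $n$. A case split on the parity of $n$ is required; in each case one should verify that $\phi(I')$ and $\phi(J')$ are scalar multiples of $\bar{e}I\bar{e}$ and $\bar{e}J\bar{e}$ respectively, so the required identities reduce to the original (\ref{rel6}), (\ref{rel7}) in $\sba(\uline{\Pi})$. Once $\phi$ is known to be a well-defined algebra homomorphism, surjectivity is immediate because every element of $\bar{e}\sba(\uline{\Pi})\bar{e}$ is a linear combination of $\bar{e}w\bar{e}$ for words $w$ in the generators, and repeated use of the commutation of $\bar{e}$ with $U_i$ ($i\neq 1$) and with $f$, of $\bar{e}e=e\bar{e}=e$, and of the braid relation (\ref{rel9}), rewrites each such element as a product of images of the chosen generators. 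Injectivity then follows from a dimension count: Proposition \ref{prop:sbafacts} gives $\algdim_{\Field}\sba[n-1](\uline{\Pi_L})$ as the sum of squared standard-module dimensions, while $\algdim_{\Field}\bar{e}\sba(\uline{\Pi})\bar{e}$ is read from the standard filtration of $\sba(\uline{\Pi})$ by tracking which $\bar{e}\Delta_n(\lambda)$ survive on applying $\bar{e}$, and the resulting counts agree.
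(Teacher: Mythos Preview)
The paper does not actually prove this proposition; it simply cites Proposition~8.1.1 of \cite{greenmartinparker2007}. Your outline therefore supplies much more than the paper does, and the relation--checking half is essentially sound. One small correction: when you track $\phi(I')$ and $\phi(J')$ you will find that they are scalar multiples of $\bar{e}J\bar{e}$ and $\bar{e}I\bar{e}$ \emph{in the opposite order} from what you wrote (for $n$ even, $\phi(I')=\delta_L^{-1}J$ and $\phi(J')=\delta_L\,\bar{e}I\bar{e}$, and similarly with roles exchanged when $n$ is odd). Since the relations (\ref{rel6}) and (\ref{rel7}) are symmetric under the swap $I\leftrightarrow J$, this does not harm the argument, but your ``respectively'' is the wrong way round.

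The more substantive issue is your injectivity step. Reading $\algdim_{\Field}\bar{e}\sba\bar{e}$ off a standard filtration by computing each $\bar{e}\Delta_n(\lambda)$ is exactly the content of Proposition~\ref{prop:Fexact}, which in the paper is stated \emph{after} the present result and is itself derived (in \cite{greenmartinparker2008}) using the isomorphism you are trying to establish. So as written your dimension count is circular. The clean way to close the argument---and almost certainly what the cited Proposition~8.1.1 does---is to use the diagram basis of $\sba$ from \cite{greenmartinparker2007}: multiplying by $\bar{e}$ on both sides picks out precisely those symplectic blob diagrams carrying a left blob at top and bottom, and stripping that blob gives a bijection with the diagram basis of $\sba[n-1](\uline{\Pi_L})$. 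That makes the equality of dimensions (hence injectivity, given your surjectivity argument) immediate, with no appeal to standard modules.
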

\begin{proof}
See Proposition 8.1.1 of \cite{greenmartinparker2007}.
\end{proof}

The isomorphisms given in Proposition \ref{prop:twoisomorphsisms} mean that the idempotent embedding functors corresponding to the idempotents $\bar{e}$ and $\bar{f}$ are of particular interest.  These functors are called \emph{localisation functors}.

\begin{define}
Let $n \in \mathbb{N}$.  Let $\Parameters \in \Field$.

Write $\sba := \sba(\Parameters)$.  Write $\sba[n-1,L] := \sba[n-1](\delta, \kappa_L, \delta_R, \delta_L, \kappa_R, \kappa)$.  Write $\sba[n-1,R] := \sba[n-1](\delta, \delta_L, \kappa_R, \kappa_L, \delta_R, \kappa)$.

Define a functor
\begin{displaymath}
F_e^{n} : \sba-\amod \rightarrow \sba[n-1,L]-\amod
\end{displaymath}
and a functor
\begin{displaymath}
F_f^{n} : \sba-\amod \rightarrow \sba[n-1,R]-\amod
\end{displaymath}
by the formulas
\begin{align}
F_e^{n}(-) &= \bar{f} (-) \\
F_f^{n}(-) &= \bar{e} (-)
\end{align}.
\end{define}

Some basic facts about these functors are summarised below.

\begin{proposition}  \label{prop:Fexact}
The functors $F_e^{n}$ and $F_f^{n}$ are exact.

Identify $\Lambda_{n-1}$ with the obvious subset of $\Lambda_n$.  Then
\begin{align}
F_e^{n} L_n(\lambda) &= L_{n-1}(-\lambda) \qquad \mbox{if } -\lambda \in \Lambda_{n-1} \\
F_e^{n} \Delta_n(\lambda) &= \Delta_{n-1}(-\lambda) \qquad \mbox{if } -\lambda \in \Lambda_{n-1} \\
F_e^{n} \nabla_n(\lambda) &= \nabla_{n-1}(-\lambda) \qquad \mbox{if } -\lambda \in \Lambda_{n-1} \rm{,}
\end{align}
and
\begin{align}
F_f^{n} L_n(\lambda) &= L_{n-1}(\lambda) \qquad \mbox{if } \lambda \in \Lambda_{n-1} \\
F_f^{n} \Delta_n(\lambda) &= \Delta_{n-1}(\lambda) \qquad \mbox{if } \lambda \in \Lambda_{n-1} \\
F_f^{n} \nabla_n(\lambda) &= \nabla_{n-1}(\lambda) \qquad \mbox{if } \lambda \in \Lambda_{n-1} \rm{.}
\end{align}
On the other hand,
\begin{align}
F_e^{n} L_n(-n) = F_e^{n} \Delta_n(-n) &= F_e^{n} \nabla_(-n) = 0 \\
F_e^{n} L_n(-(n-1)) = F_e^{n} \Delta_n(-(n-1)) &= F_e^{n} \nabla_n(-(n-1)) = 0 \\
F_f^{n} L_n(-n) = F_f^{n} \Delta_n(-n) &= F_f^{n} \nabla_n(-n) = 0 \\
F_f^{n} L_n(n-1) = F_f^{n} \Delta_n(n-1) &= F_f^{n} \nabla_n(n-1) = 0 \rm{.}
\end{align}
\end{proposition}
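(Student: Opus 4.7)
The exactness of $F_e^n$ and $F_f^n$ is the easier half. The functor $M \mapsto \bar{e}M$ is naturally equivalent to $\bar{e}\sba \otimes_{\sba} (-)$, and since $\bar{e}\sba$ is a direct summand of $\sba$ as a right $\sba$-module (complement $(1-\bar{e})\sba$) it is projective; hence this functor is exact. The argument for $\bar{f}$ is identical.

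For the identification of images of simple, standard, and costandard modules, my plan is to invoke the general theory of idempotent truncation for quasi-hereditary algebras, as developed in \cite[Appendix]{donkin} and \cite[Appendix C]{dengduparshallwang}. Proposition \ref{prop:sbafacts} gives that $\sba$ is quasi-hereditary, while Proposition \ref{prop:twoisomorphsisms} identifies $\bar{e}\sba\bar{e}$ and $\bar{f}\sba\bar{f}$ with the quasi-hereditary algebras $\sba[n-1,L]$ and $\sba[n-1,R]$ respectively. The general theorem then asserts that each localisation functor sends $L_n(\lambda)$, $\Delta_n(\lambda)$, $\nabla_n(\lambda)$ to the corresponding module for the truncated algebra whenever $\lambda$ lies in a certain subposet of $\Lambda_n$, and to zero otherwise.

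The remaining work is to pin down the subposet and the label bijection. To do this, I would trace the distinguished elements $g_\lambda$ of the hereditary chain from Proposition \ref{prop:sbafacts} through the algebra isomorphisms of Proposition \ref{prop:twoisomorphsisms}. The sign flip $\lambda \mapsto -\lambda$ for $F_e^n$ arises because the relevant isomorphism $\bar{e}\sba(\uline{\Pi})\bar{e} \cong \sba[n-1](\uline{\Pi_L})$ sends $efe \mapsto f$, effectively interchanging the roles of the left and right generators and hence of the left and right halves of $\Lambda_{n-1}$. The vanishing on $\Delta_n(-n)$ and $\Delta_n(-(n-1))$ under $F_e^n$ is verified by a direct check: both standard modules are one-dimensional by the dimension formula of Proposition \ref{prop:sbafacts}, and one sees that the relevant idempotent (equivalently $e$ or $f$) acts as zero on a generator. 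The analogous argument gives the vanishing at $-n$ and $n-1$ for $F_f^n$.

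The principal obstacle is combinatorial book-keeping rather than any deep homological step: the parameters of $\sba[n-1,L]$ and $\sba[n-1,R]$ and their correspondence with those of $\sba$ under the idempotent-truncation isomorphisms must be handled carefully, as must the sign flip for $F_e^n$. A useful consistency check is provided by the dimension formula of Proposition \ref{prop:sbafacts}: the non-zero images of standard modules must match the dimensions of standards for $\sba[n-1]$, and this forces exactly two labels to vanish in each case, matching the claimed pairs.
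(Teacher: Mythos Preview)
Your proposal is correct and is, in effect, more detailed than what the paper provides: the paper's own proof is simply the one-line citation ``See for example Section~5 of \cite{greenmartinparker2008}'', so there is no in-paper argument to compare against beyond noting that your sketch is precisely the standard idempotent-truncation argument that such a citation would unpack.

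One small point worth tightening: your heuristic for the sign flip (``$efe \mapsto f$ interchanges left and right generators'') is not quite the mechanism. Under the isomorphism of Proposition~\ref{prop:twoisomorphsisms} the new $e$ comes from $eU_1e$ and the new $f$ from $efe$, so nothing is literally swapped; the flip is forced instead by tracing the heredity idempotents $g_\lambda$ through the isomorphism together with the parameter change $\uline{\Pi} \to \uline{\Pi_L}$ (in which $\delta_L$ and $\kappa_L$ are exchanged). You correctly flag this as book-keeping, and the dimension check you propose via Proposition~\ref{prop:sbafacts} is a clean way to confirm you have the bijection right. Also note that for the general truncation theorem to apply you need the killed labels to form an order ideal of $\Lambda_n$; it is worth stating explicitly that $\{-n,-(n-1)\}$ (respectively $\{-n,n-1\}$) is downward closed in the poset of Proposition~\ref{prop:sbafacts}, which is immediate from the Hasse diagram.
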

\begin{proof}
See for example Section 5 of \cite{greenmartinparker2008}.
\end{proof}
Observe that it follows from the proposition that $F_e^{n}$ and $F_f^{n}$ each map $\mathcal{F}_n(\Delta)$ to $\mathcal{F}_{n-1}(\Delta)$ and $\mathcal{F}_n(\nabla)$ to $\mathcal{F}_{n-1}(\nabla)$.

\begin{proposition}
The left adjoint of $F_e^{n}$ is a functor
\begin{displaymath}
G_e^{n} : \sba[n-1]-\amod \rightarrow \sba-\amod \rm{,}
\end{displaymath}
given by the formula
\begin{displaymath}
G_e^{n}(-) : \sba e \otimes_{ \sba[n-1] } - \rm{.}
\end{displaymath}
The composition $F_e^{n} \circ G_e^{n} \cong  \iden_{\sba[n-1]-\amod}$.
\end{proposition}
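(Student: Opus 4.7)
The plan is to apply the standard idempotent-truncation/globalisation adjunction to the idempotent $\bar{e}\in\sba$, using Proposition \ref{prop:twoisomorphsisms} to identify $\bar{e}\sba\bar{e}$ with $\sba[n-1]$ (with the twisted parameters). Once that identification is in place, both assertions become instances of well-known general facts about idempotent embeddings, so the work is in bookkeeping rather than in any genuinely new construction.

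First I would verify that $G_e^n(-) := \sba e \otimes_{\sba[n-1]} -$ really does take values in $\sba\text{-}\amod$: the bimodule $\tensor[_{\sba}]{\sba e}{_{\sba[n-1]}}$ is well defined because $\sba e = \sba \bar{e}$ carries a left $\sba$-action and a right $\bar{e}\sba\bar{e}$-action, and the latter is identified with a right $\sba[n-1]$-action via the isomorphism of Proposition \ref{prop:twoisomorphsisms}. With this bimodule structure in hand, the adjunction
\begin{equation*}
\Hom_{\sba}(\sba e \otimes_{\sba[n-1]} N,\, M) \;\cong\; \Hom_{\sba[n-1]}\bigl(N,\, \Hom_{\sba}(\sba e, M)\bigr)
\end{equation*}
is just hom-tensor adjunction. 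Composing with the natural $\sba[n-1]$-module isomorphism $\Hom_{\sba}(\sba e, M) \cong \bar{e}M = F_e^n(M)$ (evaluation at $\bar{e}$, with inverse $m\mapsto(a\bar{e}\mapsto a\bar{e}m)$) gives the required natural bijection exhibiting $G_e^n$ as left adjoint to $F_e^n$.

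For the second claim I would compute $F_e^n \circ G_e^n$ directly. For any $N\in\sba[n-1]\text{-}\amod$,
\begin{equation*}
F_e^n G_e^n(N) \;=\; \bar{e}\bigl(\sba e \otimes_{\sba[n-1]} N\bigr) \;\cong\; \bar{e}\sba e \otimes_{\sba[n-1]} N \;\cong\; \sba[n-1] \otimes_{\sba[n-1]} N \;\cong\; N,
\end{equation*}
where the first iso uses that $\bar{e}$ acts only on the left tensor factor, the second is Proposition \ref{prop:twoisomorphsisms} (noting $\bar{e}\sba e = \bar{e}\sba\bar{e}\cdot\delta_L$, so up to the scalar $\delta_L$ absorbed in $\bar{e}=\delta_L^{-1}e$ we land on the identified copy of $\sba[n-1]$), and the last is the canonical multiplication map. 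Naturality in $N$ of each step is routine, producing the natural isomorphism $F_e^n\circ G_e^n \cong \iden$.

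The only step that requires any care is checking that the $\sba[n-1]$-actions agree on the nose across the identification $\bar{e}\sba e \cong \sba[n-1]$; this is essentially reading off Proposition \ref{prop:twoisomorphsisms} but one must be careful that the right $\sba[n-1]$-module structure on $\sba e$ used to define $G_e^n$ matches the left $\sba[n-1]$-action on $N$ via the generators $e, f, U_i$ transported by that isomorphism. I do not expect any genuine obstacle beyond this compatibility check, since the argument is entirely formal once the idempotent truncation $\bar{e}\sba\bar{e}\cong\sba[n-1]$ is available.
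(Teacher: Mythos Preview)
Your proposal is correct and is essentially the standard idempotent-truncation argument that the paper invokes by citation; the paper's own proof is simply ``See for example Theorem 8.6 of \cite{assemsimsonskowronski}'', so you have in fact supplied more detail than the paper does. The bookkeeping you flag about matching the $\sba[n-1]$-actions through the isomorphism of Proposition~\ref{prop:twoisomorphsisms} is exactly the right thing to check, and there is no hidden obstacle.
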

\begin{proof}
See for example Theorem 8.6 of \cite{assemsimsonskowronski}.
\end{proof}

\begin{remark}
The functor $F_f$ has a similar left adjoint, but this is not needed to prove the result.
\end{remark}

The functor $G_e^{n}$ is not exact and need not map $\mathcal{F}_{n-1}(\nabla)$ to $\mathcal{F}_n(\nabla)$.  However it does map $\mathcal{F}_{n-1}(\Delta)$ to $\mathcal{F}_{n}(\Delta)$.  This is shown in the following result.

\begin{proposition}  \label{prop:ryomhansenmartin}
Let $M \in \mathcal{F}_{n-1} (\Delta)$.  Then $G_e^{n} M \in \mathcal{F}_{n} (\Delta)$.  Moreover, for
any $\lambda \in \Lambda_n$,
\begin{align}
(G_e^{n} M \; : \; \Delta_n(\lambda)) &= \left\{ \begin{array}{cl}
                  (M \; : \; \Delta_{n-1}(-\lambda)) & \mbox{if } -\lambda \in \Lambda_{n-1} \\
                  0 & \mbox{otherwise}
                                                 \end{array} \right. \rm{.}
\end{align}
\end{proposition}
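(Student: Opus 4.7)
The plan is to reduce the result to the single claim $G_e^n \Delta_{n-1}(\mu) \cong \Delta_n(\lambda)$, where $\lambda$ denotes the unique element of $\Lambda_n$ with $-\lambda = \mu$. I would induct on the length of a $\Delta$-filtration $0 = M_0 \subset M_1 \subset \cdots \subset M_k = M$ with $M_i / M_{i-1} \cong \Delta_{n-1}(\mu_i)$. Right-exactness of $G_e^n$ (as a left adjoint) produces an exact sequence $G_e^n M_{i-1} \to G_e^n M_i \to G_e^n \Delta_{n-1}(\mu_i) \to 0$. If the leftmost map is also injective, then induction on $i$ assembles a $\Delta$-filtration of $G_e^n M$ whose sections realise precisely the multiplicities claimed in the proposition.

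Injectivity of that map follows from the vanishing $\mathrm{Tor}_1^{\sba[n-1]}(\sba \bar{e}, \Delta_{n-1}(\mu)) = 0$. To obtain the vanishing I would exhibit a filtration of $\sba \bar{e}$ as a right $\sba[n-1]$-module whose successive quotients behave projectively on the subcategory of $\Delta$-filtered modules. The raw material is the heredity chain of Proposition \ref{prop:sbafacts} together with the isomorphism $\bar{e}\sba \bar{e} \cong \sba[n-1]$ of Proposition \ref{prop:twoisomorphsisms}; this is the usual Cline--Parshall--Scott style recollement argument adapted from Martin--Ryom-Hansen.

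For the base case I would use the adjunction $G_e^n \dashv F_e^n$: the identity on $F_e^n \Delta_n(\lambda) = \Delta_{n-1}(\mu)$ transposes to a counit-style map $\phi : G_e^n \Delta_{n-1}(\mu) \to \Delta_n(\lambda)$. Applying $F_e^n$ to $\phi$ and using $F_e^n \circ G_e^n \cong \iden$ together with the triangle identities shows that $F_e^n(\phi)$ is an isomorphism. Hence $\coker(\phi)$ is annihilated by $F_e^n$, so its composition factors lie in $\{L_n(-n), L_n(-(n-1))\}$ by Proposition \ref{prop:Fexact}. But $\Delta_n(\lambda)$ has simple head $L_n(\lambda)$ with $\lambda \notin \{-n, -(n-1)\}$, so $\coker(\phi) = 0$ and $\phi$ is surjective.

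The main obstacle is showing that $\phi$ is also injective. Since $F_e^n \ker(\phi) = 0$, the kernel again has composition factors only in $\{L_n(-n), L_n(-(n-1))\}$, and these are \emph{not} ruled out by the head-of-$\Delta_n(\lambda)$ argument used for the cokernel. I would close this step by a dimension count: combining the explicit dimension formulas for standard modules from Proposition \ref{prop:sbafacts} with an independent calculation of $\algdim_{\Field}(\sba \bar{e} \otimes_{\sba[n-1]} \Delta_{n-1}(\mu))$ extracted from the presentation in Definition \ref{define:sbarelations} and the heredity chain, one verifies $\algdim G_e^n \Delta_{n-1}(\mu) = \algdim \Delta_n(\lambda)$, which together with surjectivity of $\phi$ forces $\ker(\phi) = 0$.
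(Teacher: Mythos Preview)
The paper does not give an independent proof here; it simply cites Proposition~6 of \cite{martinryomhansen2004} and asserts that the argument there carries over verbatim. Your outline is exactly the standard shape of that argument (reduce to $G_e^{n}\Delta_{n-1}(\mu)\cong\Delta_n(-\mu)$, then build up along a $\Delta$-filtration), so in spirit you are reproducing what the paper invokes.

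Two places in your sketch are genuinely soft and would need to be tightened before this counts as a proof. First, the $\mathrm{Tor}_1$ vanishing: you say you would filter $\sba\bar e$ as a right $\sba[n-1]$-module by pieces that are acyclic on $\Delta$-filtered modules, but you do not say what the filtration is or why the pieces are acyclic. In the blob/Temperley--Lieb setting this is usually handled by showing that $\sba\bar e$ is actually \emph{free} (or at least projective) over $\bar e\sba\bar e\cong\sba[n-1]$, which one reads off from an explicit diagram basis; Lemma~\ref{lemma:sbaeasrightmodule} in this paper gives the generators $\{x_k\}$ but not the freeness, so you would need to supply that. Second, and more seriously, your injectivity step for $\phi$ rests on a dimension count that you do not carry out. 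Computing $\algdim_{\Field}\bigl(\sba\bar e\otimes_{\sba[n-1]}\Delta_{n-1}(\mu)\bigr)$ directly from Definition~\ref{define:sbarelations} and the heredity chain is not a one-line matter; without knowing the right-module structure of $\sba\bar e$ precisely (e.g.\ that the $x_k$ form a free basis) the count cannot even begin. Note also that if you do establish projectivity of $\sba\bar e$ over $\sba[n-1]$ for the first point, the dimension count follows immediately, and conversely the dimension identity for every $\mu$ forces the Tor vanishing; so the two gaps are really one missing structural fact about $\sba\bar e$ as a right module.

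Your surjectivity argument for $\phi$ is clean and correct: since $-\mu\in\Lambda_{n-1}$ forces $\lambda=-\mu\notin\{-n,-(n-1)\}$, any nonzero quotient of $\Delta_n(\lambda)$ has $L_n(\lambda)$ in its head, which rules out a cokernel whose factors are confined to the simples killed by $F_e^{n}$.
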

\begin{proof}
This is essentially the result proved for the \emph{ordinary} blob algebra in Proposition 6 of \cite{martinryomhansen2004}.  The proof given in that paper carries over almost verbatim.\end{proof}

Recall that the \emph{counit} of an adjunction $G \dashv F$,
\begin{displaymath}
 \xymatrix{
\mathcal{C} \ar@/_1pc/[rr]_{F} & &  \mathcal{D} \ar@/_1pc/[ll]_{G}
} \rm{,}
\end{displaymath}
is the natural transformation $\epsilon_{-} : G F \rightarrow \iden_{\mathcal{C}}$ such that, given any 
$m : GFy \rightarrow x$ in $\mathcal{C}$,  there is an $m^{\prime} : y \rightarrow x$ in $\mathcal{C}$ making the diagram below commute: 
\begin{displaymath}
\xymatrix{
y \ar@{.>}[d]_{m^{\prime}} & & GF y \ar[r]^{m} \ar@{.>}[d]_{GFm^{\prime}} &  x \ar@{=}[d] \\
x & & GF x \ar[r]_{\epsilon_x} & x
} \rm{.}
\end{displaymath}
(See, for example, Theorem IV.1.2 of \cite{maclane}).

\begin{lemma}  \label{lemma:sbaeasrightmodule}
Let $n \in \mathbb{N}$.  Let $\sba = \sba(\uline{\Pi})$.  As a right $\bar{e} \sba \bar{e}$ module, $\sba \bar{e}$ is generated by the set $\mathcal{X}_n := \{ x_k \}_{0 \leq k \leq n}$ defined by
\begin{align*}
x_0 &= e \\
x_i &= U_i x_{i-1} \qquad \mbox{for } 1 \leq i < n \\
x_n &= f x_{n-1} \rm{.}
\end{align*}
\end{lemma}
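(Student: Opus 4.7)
The plan is to proceed by induction on the length of a monomial in the generating set $\mathcal{G}_n = \{e, U_1, \ldots, U_{n-1}, f\}$; the goal is to show that every element $m\bar{e}$ with $m$ a monomial in $\mathcal{G}_n$ lies in $\sum_{k=0}^n x_k \cdot (\bar{e}\sba\bar{e})$. The base case $m = 1$ is immediate, since $x_0 = e$ and $e\bar{e} = e$ give $\bar{e} = x_0 \cdot \tfrac{1}{\delta_L}\bar{e}$. For the inductive step, factor $m = g m'$ with $g \in \mathcal{G}_n$, apply the inductive hypothesis to write $m'\bar{e} = \sum_k x_k y_k$ with each $y_k \in \bar{e}\sba\bar{e}$, and observe that $m\bar{e} = \sum_k (g x_k) y_k$. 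Since $\bar{e}\sba\bar{e}$ is closed under multiplication, the lemma reduces to the following combinatorial claim: for every $g \in \mathcal{G}_n$ and every $k \in \{0, \ldots, n\}$ one can write $g x_k = \sum_{k'} x_{k'} z_{k'}$ with all $z_{k'} \in \bar{e}\sba\bar{e}$.

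That claim is a case analysis driven by the defining relations of Definition \ref{define:sbarelations}, together with two bookkeeping observations: every $x_k$ ends in $e$ (so $x_k \bar{e} = x_k$) and every $U_j$ with $j \geq 2$ commutes with $\bar{e}$ by \eqref{rel10}. The immediate cases are $U_{k+1} x_k = x_{k+1}$ (definition), $U_k x_k = \delta x_k$ from \eqref{rel1}, $U_{k-1} x_k = x_{k-1}$ from \eqref{rel9}, $f x_{n-1} = x_n$ (definition), $U_{n-1} x_n = \kappa_R x_{n-1}$ from \eqref{rel5} and $f x_n = \delta_R x_n$ from \eqref{rel3}. For $U_i x_k$ with $i \geq k+2$, relations \eqref{rel8} and \eqref{rel10} let $U_i$ slide past $x_k$ and yield $U_i x_k = x_k \cdot (U_i \bar{e})$; for $i \leq k-2$, the same commutations combined with $U_i U_{i+1} U_i = U_i$ give $U_i x_k = x_i \cdot (U_k U_{k-1} \cdots U_{i+2}) \bar{e}$. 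For $g = e$ the element $e x_k$ begins and ends in $e$, so $e x_k \in \bar{e}\sba\bar{e}$ and equals $x_0 \cdot \tfrac{1}{\delta_L}(e x_k)$. For $g = f$ with $0 \leq k \leq n-2$ one pushes $f$ past the $U_j$'s using \eqref{rel11} and invokes $ef = fe$ from \eqref{rel12} to obtain $f x_k = x_k \cdot \bar{e} f \bar{e}$. Finally, the residual cases $U_i x_n$ with $i \leq n-2$ are handled via \eqref{rel11} by writing $U_i x_n = f(U_i x_{n-1})$ and then reapplying cases already treated.

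The routine calculations are easy; the main obstacle is verifying, in each case, that the resulting ``coefficient'' $z_{k'}$ genuinely lies in $\bar{e}\sba\bar{e}$, that is, that $\bar{e} z_{k'} \bar{e} = z_{k'}$. This is precisely where the observation about $U_j$ ($j \geq 2$) commuting with $\bar{e}$ does all the work: in the $i \leq k-2$ expansion, every index in the product $U_k \cdots U_{i+2}$ is at least $i+2 \geq 3$, so multiplication by $\bar{e}$ on the right yields an element fixed by $\bar{e}$ on both sides; similarly $\bar{e} f \bar{e}$, $U_i \bar{e}$ and the other coefficient elements that appear are sandwich-fixed by construction.
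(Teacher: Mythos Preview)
Your proof is correct and is essentially a fully worked-out version of what the paper does: the paper's entire argument is the single sentence ``This follows at once from the algebra relations given in Definition~\ref{define:sbarelations}'', and your case analysis on $g x_k$ is exactly the routine verification that sentence is gesturing at. There is no genuine difference in approach---you have simply made explicit the commutation moves and the check that each resulting coefficient lies in $\bar{e}\sba\bar{e}$, which the paper regards as immediate.
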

\begin{proof}
This follows at once from the algebra relations given in Definition \ref{define:sbarelations}.
\end{proof}

\begin{proposition} 
Let $n \in \mathbb{N}$.  Let $\sba = \sba(\uline{\Pi})$.  Let $M \in \sba-\amod$ be a finitely generated module with spanning set $\{v_i\}_{i \in I}$.

Then $\left( G_e^{n} \circ F_e^{n} \right) M$ has spanning set $\{x \otimes v_i\}_{x \in \mathcal{X}_n, i \in I}$.  The action of the generators $\mathcal{G}_n = \{e, U_1, \ldots, U_{n-1}, f\}$ of $\sba$ on these elements is given by
\begin{align*}
e \left( x_0 \otimes v_i \right) &= \delta_L \left( x_0 \otimes v_i \right) \\
U_j \left( x_j \otimes v_i \right) &= \delta \left( x_j \otimes v_i \right) \\
f \left( x_n \otimes v_i \right) &= \delta_R \left( x_n \otimes v_i \right) \\
U_j \left( x_{j-1} \otimes v_i \right) &= \left( x_j \otimes v_i \right) \\
f \left( x_{n-1} \otimes v_i \right) &= \left( x_n \otimes v_i \right) \\
U_j \left( x_{j+1} \otimes v_i \right) &= \left( x_j \otimes v_i \right) \\
U_k \left( x_j \otimes v_i \right) &= \frac{1}{\delta_L^2} \left( x_j \otimes (e U_k e v_i) \right) \qquad \mbox{if } k > j+1 \\
f \left( x_j \otimes v_i \right) &= \frac{1}{\delta_L^2} \left( x_j \otimes (e f e v_i) \right) \qquad \mbox{if } j < n-1 \\
e \left( x_j \otimes v_i \right) &= \frac{1}{\delta_L} \left( x_0 \otimes ( e U_{j} U_{j-1} \ldots U_1 e v_i) \right) \qquad \mbox{if } j \geq 1 \\
U_k \left( x_j \otimes v_i \right) &= U_j U_{j-1} \ldots U_{k+2} \left( x_k \otimes v_i  \right)  \qquad \mbox{if } k < j-1
\end{align*}
\end{proposition}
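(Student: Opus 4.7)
The plan is to unpack the formula $(G_e^n \circ F_e^n) M = \sba e \otimes_{\bar{e} \sba \bar{e}} \bar{e} M$, extract the spanning set from Lemma \ref{lemma:sbaeasrightmodule}, and then verify each action formula by a short manipulation in $\sba$ using the defining relations.

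I would first observe that every $x_k \in \mathcal{X}_n$ ends with $e$ on the right, and hence $x_k \bar{e} = x_k$. Combined with Lemma \ref{lemma:sbaeasrightmodule}, this means any element $y \otimes \bar{e}m$ of the tensor product may be rewritten, by expanding $y = \sum_k x_k a_k$ with $a_k \in \bar{e}\sba\bar{e}$, as $\sum_k x_k \otimes a_k \bar{e}m$. Since $\{\bar{e}v_i\}$ spans $\bar{e}M$, this yields the spanning set $\{x_k \otimes v_i\}$ under the abuse of notation $x_k \otimes v_i := x_k \otimes \bar{e}v_i$.

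To verify each formula $y(x_j \otimes v_i) = \ldots$, the general strategy is to rewrite $y x_j \in \sba e$ as $x_m \cdot a$ with $a \in \bar{e}\sba\bar{e}$, so that $y(x_j \otimes v_i) = x_m \otimes a \bar{e}v_i$. The first six formulas are immediate from the recursion defining $\mathcal{X}_n$ together with the relations $e^2 = \delta_L e$, $U_i^2 = \delta U_i$, $f^2 = \delta_R f$ and the braid relation $U_j U_{j \pm 1} U_j = U_j$.  For example, $U_j x_{j+1} = U_j U_{j+1} U_j x_{j-1} = U_j x_{j-1} = x_j$ directly via the braid relation applied to the defining expression $x_{j+1} = U_{j+1} x_j = U_{j+1} U_j x_{j-1}$.

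For the remaining four formulas, I would use the commutation relations $e U_k = U_k e$ (for $k \neq 1$), $f U_k = U_k f$ (for $k \neq n-1$) and $ef = fe$ to slide the acting generator past $U_j, \ldots, U_1$ in $x_j = U_j \cdots U_1 e$ until it is adjacent to the terminal $e$; the resulting word then factorises as $x_m \cdot (\bar{e} z \bar{e})$ for some $z \in \sba$, aided by identities like $eU_k e = \delta_L U_k e$. This handles the cases $U_k x_j$ with $k > j+1$ (giving $U_k x_j = x_j \cdot (\bar{e} U_k \bar{e})$, hence the factor $1/\delta_L^2$) and $f x_j$ with $j < n-1$ (giving $f x_j = x_j \cdot (\bar{e} f \bar{e})$ by an analogous argument).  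For $e x_j$ with $j \ge 1$ the computation $e x_j = e U_j \cdots U_1 e = \delta_L \, x_0 \cdot (\bar{e} U_j \cdots U_1 \bar{e})$ produces the factor $1/\delta_L$.  The final formula, for $U_k x_j$ with $k < j-1$, requires no interaction with $e$: one commutes $U_k$ past $U_j, \ldots, U_{k+2}$ and applies the braid identity $U_k U_{k+1} U_k = U_k$ to obtain $U_k x_j = U_j \cdots U_{k+2} x_k$.  The main difficulty is purely bookkeeping: tracking the $\delta_L$ factors arising from absorbing and re-introducing $e$, and verifying in each case that the element pushed across the tensor really does lie in $\bar{e}\sba\bar{e}$.
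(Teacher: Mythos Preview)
Your proposal is correct and follows precisely the approach the paper indicates: the paper's own proof is the single sentence ``This follows, after some calculation, from the relations given in Definition~\ref{define:sbarelations} and the definition of $\{x_j\}$ given in Lemma~\ref{lemma:sbaeasrightmodule}.''  You have simply spelled out that calculation in detail, and your bookkeeping of the $\delta_L$ factors and the factorisations $yx_j = x_m\cdot(\bar{e}z\bar{e})$ is accurate.
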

\begin{proof}
This follows, after some calculation, from the relations given in Definition \ref{define:sbarelations} and the definition of $\{x_j\}$ given in Lemma \ref{lemma:sbaeasrightmodule}.
\end{proof}

\begin{proposition}  \label{prop:whatispsi}
Let $n \in \mathbb{N}$.  Let $\uline{\Pi} \in (\Field^{\times})^{6}$ and let $\sba = \sba(\uline{\Pi})$.  Let $M \in \sba-\amod$ be a finitely generated module with spanning set $\{v_i\}_{i \in I}$.  

The counit $\epsilon_M : (G_e^{n} \circ F_e^{n}) M \rightarrow M$ is the $\Field$-linear map defined by
\begin{displaymath}
\epsilon(x \otimes v_i) = xv_i \rm{.}
\end{displaymath}
\end{proposition}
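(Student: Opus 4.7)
The plan is to identify the counit of the adjunction $G_e^n \dashv F_e^n$ with the standard ``evaluation'' map associated with the tensor–Hom adjunction for an idempotent embedding, and then to verify that this map, when restricted to the spanning set given in the previous proposition, coincides with the formula stated.

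First I would recall the general situation: for any ring $A$ and any idempotent $\bar{e} \in A$, the functor $F(-) = \bar{e}(-)$ is isomorphic to $\bar{e} A \otimes_A -$, and its left adjoint is $G(-) = A \bar{e} \otimes_{\bar{e} A \bar{e}} -$. In this setup the counit at a left $A$-module $M$ is the map
\begin{displaymath}
\varepsilon_M \colon A\bar{e} \otimes_{\bar{e} A \bar{e}} \bar{e} M \longrightarrow M, \qquad x \otimes m \longmapsto xm,
\end{displaymath}
which is well defined (the $\bar{e} A \bar{e}$-balancing on the left matches the $A$-action on the right) and $A$-linear. Verifying that this $\varepsilon_M$ together with the obvious unit $\eta_N \colon N \to \bar{e}(A\bar{e} \otimes_{\bar{e}A\bar{e}} N)$, $n \mapsto \bar{e} \otimes n$, satisfies the triangle identities is a short and standard exercise, and by uniqueness (up to natural isomorphism) of the counit of an adjunction, this $\varepsilon$ is \emph{the} counit.

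Second, I would specialise to $A = \sba$ and to the idempotent $\bar{e}$ from Proposition \ref{prop:twoisomorphsisms}. Given a spanning set $\{v_i\}_{i \in I}$ of $M$, the set $\{\bar{e} v_i\}_{i \in I}$ spans $\bar{e} M = F_e^n M$, and by Lemma \ref{lemma:sbaeasrightmodule} the set $\mathcal{X}_n$ generates $\sba \bar{e}$ as a right $\bar{e} \sba \bar{e}$-module. Therefore the simple tensors $x_k \otimes \bar{e} v_i$ with $x_k \in \mathcal{X}_n$ span $(G_e^n \circ F_e^n) M$, in agreement with the previous proposition. The slight notational point to address is that each $x_k \in \sba \bar{e}$ satisfies $x_k \bar{e} = x_k$ (since $\bar{e}$ is idempotent and $x_k$ already lies in $\sba \bar{e}$), so inside the tensor product $x_k \otimes v_i = x_k \bar{e} \otimes v_i = x_k \otimes \bar{e} v_i$. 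Substituting into the evaluation formula gives
\begin{displaymath}
\varepsilon_M(x_k \otimes v_i) = x_k (\bar{e} v_i) = (x_k \bar{e}) v_i = x_k v_i,
\end{displaymath}
which is exactly the claimed formula.

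I do not anticipate a genuine obstacle: the statement is really an unpacking of the standard description of the counit for an idempotent embedding. The only place to be careful is the bookkeeping between $v_i \in M$ and $\bar{e} v_i \in \bar{e} M$, and the observation $x_k \bar{e} = x_k$ that lets the tensor be rewritten in either way. Once that is in place, $\Field$-linearity is automatic from the $\Field$-linearity of the tensor product, and $\sba$-linearity matches the action of the generators computed in the preceding proposition.
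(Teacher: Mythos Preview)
Your proposal is correct and follows essentially the same approach as the paper: both identify the counit of $G_e^n \dashv F_e^n$ with the evaluation map $x \otimes m \mapsto xm$ coming from the tensor--Hom adjunction for an idempotent embedding. The paper reaches this by writing the adjunction isomorphism $\Phi$ explicitly and computing $\epsilon_M = \Phi(\iden_{F_e^n M})$ (citing Mac~Lane), whereas you invoke the same general fact and verify it via the triangle identities; your extra remark that $x_k \bar{e} = x_k$ reconciles $v_i$ with $\bar{e} v_i$ is a point the paper leaves implicit.
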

\begin{proof}
Since $G_e^{n}$ is the left adoint of $F_e^{n}$, there is an isomorphism
\begin{displaymath}
\Phi: \Hom_{\sba[n-1]} (N, F_e^{n} M) \cong \Hom_{\sba} (G_e^{n} N, M) \rm{,}
\end{displaymath}
for any $M \in \sba-\amod$ and $N \in \sba[n-1]-\amod$.  

If $\theta : v \mapsto e T(v) \in \Hom_{\sba[n-1]} (N, F_e^{n} M)$ for $T(v)$ some $\Field$-linear combination of the spanning set $\{v_i\}_{i \in I}$, then $\Phi(\theta) : x \otimes v \mapsto xT(v)$.

Let $N := F_e^{n} M$.  Let $\iden_{N}$ be the identity function on $N$.  Using Theorem IV.1.2 of \cite{maclane} once more, note that the counit $\epsilon_M$ is given by
\begin{displaymath}
\epsilon_M = \Phi (\iden_N) \in \Hom_{\sba} (G_e^{n} N, M) = \Hom_{\sba} ((G_e^{n} \circ F_e^{n}) M, M) \rm{.}           
\end{displaymath}

If $M$ has spanning set $\{v_i\}_{i \in I}$ then, for any $v_i$,
\begin{displaymath}
\iden_N (ev_i) = ev_i \rm{,}
\end{displaymath}
therefore 
\begin{displaymath}
\epsilon_M(x \otimes v) = xv \rm{,}
\end{displaymath}
as claimed.
\end{proof}

\section{The Tensor Space Module $\themodule$}

\subsection{Definition of the Module $\mathcal{V}(n)$}

Let $a,b,c,d,x,y,z,w \in \Field^{\times}$.

In $\cite{reeves2011}$ a construction is presented for a ``tensor space'' symplectic blob algebra module over the ground ring $\mathbb{Z}[a_0^{\pm 1},b_0^{\pm 1},c_0^{\pm 1},d_0^{\pm 1},x_0^{\pm 1},y_0^{\pm 1},z_0^{\pm 1},w_0^{\pm 1}]$.  This module passes to a $\sba(\uline{\Pi_0})$ module $\mathcal{V}(n)$ provided that the parameters $\uline{\Pi_0} \in \Field^{6}$ satisfy conditions recalled in Theorem \ref{thm:rep}.

Let $V_m$ be the $m$-dimensional $\Field$-vector space with basis $\{v_1, v_2, \ldots, v_m \}$.  Then the $4N$-fold tensor product
\begin{displaymath}
V_m^{\otimes 4N} := \underbrace{ V_m \otimes_{\Field} V_m \otimes_{\Field} \ldots \otimes_{\Field} V_m  }_{4N \mbox{ copies}} \rm{,}
\end{displaymath}
has basis 
\begin{displaymath}
\left\{ v_{\alpha_{-2N+1}} \otimes v_{\alpha_{-2N+2}} \otimes \ldots \otimes v_{\alpha_{2N}} \; | \; \alpha_k \in \{1, 2, \ldots, m\} \right\} \rm{.}
\end{displaymath}

There is a bijection between this basis of $V_m^{\otimes 4N}$ and words of length $4N$ in the alphabet $\{1,2,\ldots, m\}$.  It is useful to introduce the following notation.

\begin{define}
Define a map $\uline{\;} : \{ 1,2 \}^{4N} \rightarrow V^{\otimes 4N}$ by
\begin{displaymath}
 \uline{\alpha_{-(2N-1)} \alpha_{-(2N-2)} \ldots \alpha_{2N}} := v_{\alpha_{-2N+1}} \otimes v_{\alpha_{-2N+2}} \otimes \ldots \otimes v_{\alpha_{2N}} \rm{.}
\end{displaymath}
\end{define}

Now consider the tensor space $V_2^{\otimes 4n}$ for $n \in \mathbb{N}$.  Let $I_{n} = \{-2n+1,-2n+2, \ldots, 2n\}$.

\begin{define}  \label{def:Rmatrices}
Let $n \in \mathbb{N}$.   Let $q \in \mathcal{F}^{\times}$.

Define a family of operators $\{ R^{q}_i \}_{i \in I_{n}} \subset \End_{\mathcal{A}}( V^{\otimes 4n})$ as follows:

Let $\alpha = \uline{\alpha_{-2n+1} \ldots \alpha_{2n}} \in V^{\otimes 4n}$ be a basis element.  Then for $i \in I_{n} \setminus \{2n\}$ we define $R^{q}_{i}$ by 
\begin{align*}
 R^{q}_i \alpha &= \delta'(\alpha_i,\alpha_{i+1}) \left( q^{2 - \alpha_i} \uline{\alpha_{-2n+1} \ldots 1 2 \ldots \alpha_{2n}} \right. \\
& \qquad \qquad \qquad \qquad \qquad + \left.  q^{1 - \alpha_i} \uline{\alpha_{-2n+1} \ldots 2 1 \ldots \alpha_{2n}} \right) \rm{,}
\end{align*}
and we define $R^{q}_{2n}$ by
\begin{align*} 
R^{q}_{2n} \alpha &= \delta`(\alpha_{2n},\alpha_{-2n+1}) \left( q^{2 - \alpha_{2n}} \uline{2 \alpha_{-2n+2} \ldots \alpha_{2n-1} 1} \right. \\
& \qquad \qquad \qquad \qquad \qquad + \left. q^{1 - \alpha_{2n}} \uline{ 1 \alpha_{-2n+2} \ldots \alpha_{2n-1} 2 } \right) \rm{.}
\end{align*}
\end{define}

\begin{theorem} \label{thm:rep}
Fix $n \in \mathbb{N}$.  Let $a,b,c,d,x,y,z,w \in \Field^{\times}$.  Let $\Parameters \in \Field$.  Let $\sba = \sba(\Parameters)$.

Let $\mathcal{G}_n = \{ e, U_1, \ldots, U_{n-1}, f \}$ be the generators of $\sba$ given in Definition \ref{define:sbarelations}.  Let $I_{n} = \{-2n+1,-2n+2\ldots,2n\}$.  Let $\{R^q_i\}_{i \in I_{n}} \subset \End_{\Field} (V^{\otimes4n})$ be the operators defined in Definition \ref{def:Rmatrices}.

Define a map $\mathcal{R} : \mathcal{G}_n \rightarrow \End_{\Field}(V^{\otimes 4n})$ by 
 \begin{align}
  \mathcal{R}(U_i) &= R^a_{-n-i} R^b_{-n+i} R^c_{n-i} R^d_{n+i} \label{tsr:eq1} \\
  \mathcal{R}(e) &= R^x_{-n} R^y_{n} \label{tsr:eq2} \\
  \mathcal{R}(f) &= R^z_0 R^w_{2n} \rm{ .} \label{tsr:eq3} 
 \end{align}
Then $\mathcal{R}$ extends to a unique representation of $\sba$, also called $\mathcal{R}$, if and only if
 \begin{align}
  \delta &= \left( a + \frac{1}{a} \right) \left( b + \frac{1}{b} \right) \left( c + \frac{1}{c} \right) \left( d + \frac{1}{d} \right) \label{thmcon1} \\
  \delta_L &= \left( x + \frac{1}{x} \right) \left( y + \frac{1}{y} \right) \label{thmcon2} \\
  \delta_R &= \left( z + \frac{1}{z} \right) \left( w + \frac{1}{w} \right) \label{thmcon3} \\
  \kappa_L &= \left( \frac{ab}{x} + \frac{x}{ab} \right) \left( \frac{cd}{y} + \frac{y}{cd} \right) \label{thmcon4} \\
  \kappa_R &= \left( \frac{ad}{w} + \frac{w}{ad} \right) \left( \frac{bc}{z} + \frac{z}{bc} \right) \label{thmcon5} \\
  \kappa &= \left\{ \begin{array}{lr}
                    \frac{xy}{zw} + 2 + \frac{zw}{xy} & \mbox{if } n \mbox{ is odd} \\
                    \frac{abcd}{xyzw} + 2 + \frac{xyzw}{abcd} & \mbox{if } n \mbox{ is even}
                   \end{array} \right. \rm{ .} \label{thmcon6}
 \end{align}
\end{theorem}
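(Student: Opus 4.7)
The plan is to verify each defining relation of $\sba$ from Definition \ref{define:sbarelations} by direct computation, using the factorisations (\ref{tsr:eq1})--(\ref{tsr:eq3}), and to identify which relations force which of the conditions (\ref{thmcon1})--(\ref{thmcon6}). Since $\sba$ is presented by generators and relations, once every relation is verified under the stated hypotheses, $\mathcal{R}$ extends uniquely to an algebra homomorphism. Necessity in each case will follow by evaluating the relation on a well-chosen basis vector such as $\uline{12\,12\cdots 12}$ and comparing coefficients on either side.

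First I would record some preparatory identities for the R-matrices of Definition \ref{def:Rmatrices}. A two-strand calculation gives $(R^q_i)^2 = [2]_q\, R^q_i$, so each $R^q_i$ is (up to this scalar) a rank-one projection onto the subspace spanned by $\uline{12}$ and $\uline{21}$ at strands $\{i, i+1\}$. Two R-matrices $R^q_i$ and $R^{q'}_j$ commute whenever $\{i, i+1\}$ and $\{j, j+1\}$ are disjoint subsets of $I_n$, where by convention $R^q_{2n}$ acts on the cyclic pair $\{2n, -2n+1\}$. For strand pairs sharing a single index, a three-strand calculation yields a Temperley--Lieb-type identity $R^q_i R^{q'}_{i\pm 1} R^q_i = R^q_i$, valid for all parameters. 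When several R-matrices with distinct parameters meet on an overlapping strand group, a three- or four-strand reduction produces a scalar that is a specific monomial combination of the parameters involved; this is the mechanism that will extract the conditions (\ref{thmcon4})--(\ref{thmcon6}).

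Given these lemmas I would dispatch the relations in groups. The squaring relations (\ref{rel1})--(\ref{rel3}) factor over disjoint strand pairs and yield the scalars $[2]_a[2]_b[2]_c[2]_d$, $[2]_x[2]_y$, and $[2]_z[2]_w$, matching the conditions (\ref{thmcon1})--(\ref{thmcon3}). The commutation relations (\ref{rel8}), (\ref{rel10}), (\ref{rel11}), (\ref{rel12}) hold by strand-pair disjointness and impose no constraint on parameters. The braid-like relation (\ref{rel9}) reduces to the three-strand TL identity in each of the four independent strand groups. Relations (\ref{rel4}) and (\ref{rel5}) force (\ref{thmcon4}) and (\ref{thmcon5}) via a four-strand reduction at the two affected strand blocks, producing the bi-quadratic combinations $\left(\frac{ab}{x}+\frac{x}{ab}\right)\left(\frac{cd}{y}+\frac{y}{cd}\right)$ and $\left(\frac{ad}{w}+\frac{w}{ad}\right)\left(\frac{bc}{z}+\frac{z}{bc}\right)$.

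The main obstacle will be the central relations (\ref{rel6}) and (\ref{rel7}), which involve the long products $I$ and $J$ containing the wrap-around operator $R^q_{2n}$ and whose shape depends on the parity of $n$. I would first show that $\mathcal{R}(I)$ is, up to a scalar, a product of R-matrices $R^{q_i}_{i}$ where the indices $i$ cover a chosen transversal of the $4n$ strands and each effective parameter $q_i$ is a monomial in $a, b, c, d, x, y, z, w$ determined by which generator of $I$ contributes at strand pair $i$; the description of $\mathcal{R}(J)$ is analogous but shifted, so in the composite $\mathcal{R}(IJI)$ each overlapping strand pair acquires an extra scalar of the form $q/q' + q'/q$. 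The parity dichotomy in (\ref{thmcon6}) is an immediate consequence of the fact that $I$ and $J$ are assembled from different subsets of $\{e, U_1, \ldots, U_{n-1}, f\}$ according to whether $n$ is odd or even, so the surviving monomial ratio is $xy/(zw)$ in the odd case and $abcd/(xyzw)$ in the even case. Matching this scalar to $\kappa$ yields (\ref{thmcon6}); the parallel computation for $JIJ = \kappa J$ gives the same condition. For necessity throughout, evaluating each relation on an alternating basis vector isolates a rational function of the parameters whose vanishing is precisely the stated condition.
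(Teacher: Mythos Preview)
The paper does not prove this statement at all: its ``proof'' is a one-line citation, ``This is a special case of Theorem~2.5 of \cite{reeves2011}.'' Your proposal therefore goes well beyond what the paper does, and the strategy you outline --- verify each relation of Definition~\ref{define:sbarelations} on the operators $\mathcal{R}(g)$, and read off which relation forces which of (\ref{thmcon1})--(\ref{thmcon6}) --- is the natural direct proof and is essentially correct.

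One genuine slip: your ``preparatory identity'' $R^{q}_{i}\,R^{q'}_{i\pm 1}\,R^{q}_{i} = R^{q}_{i}$, claimed ``valid for all parameters'', is false when $q\neq q'$. A three-strand check shows that $R^{q}_{1}R^{q'}_{2}R^{q}_{1}$ acts on $\uline{121}$ and $\uline{211}$ with prefactor $q/q'$ but on $\uline{212}$ and $\uline{122}$ with prefactor $q'/q$, so the composite is not even a scalar multiple of $R^{q}_{1}$ in general. This does not damage your argument for relation~(\ref{rel9}), because in $\mathcal{R}(U_i)\mathcal{R}(U_{i\pm 1})\mathcal{R}(U_i)$ each of the four strand blocks carries the \emph{same} parameter ($a$, $b$, $c$, or $d$) in all three factors, and the identity does hold with $q=q'$. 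Where adjacent R-matrices carry different parameters --- precisely in the verification of (\ref{rel4}), (\ref{rel5}), (\ref{rel6}), (\ref{rel7}) --- you correctly treat this as a separate four-strand reduction producing a nontrivial scalar, and that is where (\ref{thmcon4})--(\ref{thmcon6}) come from. Just restate the three-strand lemma with matching parameters and your sketch stands.
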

\begin{proof}
This is a special case of Theorem 2.5 of \cite{reeves2011}.
\end{proof}

\begin{corollary}
For any $\Parameters \in \Field$ there exist $a,b,c,d,x,y,z,w \in \Field^{\times}$ such that the conditions of Theorem \ref{thm:rep} are satisfied.
\end{corollary}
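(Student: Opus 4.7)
The starting observation is that the map $\phi:\Field^{\times}\to\Field$, $\phi(t)=t+t^{-1}$, is surjective: for any $\alpha\in\Field$ the polynomial $t^{2}-\alpha t+1$ has a root in $\Field$ (since $\Field$ is algebraically closed), and that root is necessarily nonzero because the constant term equals $1$. Each of the six conditions \eqref{thmcon1}--\eqref{thmcon6} is a relation saying that a product of $\phi$-values of rational monomials in $a,b,c,d,x,y,z,w$ equals a prescribed element of $\Field$, once \eqref{thmcon6} is rewritten as $\kappa-2=\phi(\cdot)$.

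It is convenient to reparametrise. Set $P=ab/x$, $Q=cd/y$, $R=ad/w$, $S=bc/z$; then $x,y,z,w$ are recovered from $a,b,c,d,P,Q,R,S$, and the six equations become
\begin{align*}
\phi(a)\phi(b)\phi(c)\phi(d) &= \delta, \\
\phi(ab/P)\phi(cd/Q) &= \delta_L, \\
\phi(ad/R)\phi(bc/S) &= \delta_R, \\
\phi(P)\phi(Q) &= \kappa_L, \\
\phi(R)\phi(S) &= \kappa_R,
\end{align*}
together with $\phi(RS/PQ)=\kappa-2$ when $n$ is odd or $\phi(abcd/PQRS)=\kappa-2$ when $n$ is even. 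Eight unknowns in $\Field^{\times}$ against six equations leaves two degrees of freedom to manoeuvre.

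My plan is to solve the system in stages, using the surjectivity of $\phi$ at each step to realise any prescribed $\phi$-value. First I would pick $\mu\in\Field^{\times}$ with $\phi(\mu)=\kappa-2$, encoding \eqref{thmcon6}. Next choose $P,Q,R,S$ satisfying $\phi(P)\phi(Q)=\kappa_L$ and $\phi(R)\phi(S)=\kappa_R$ (in the $n$ odd case also subject to the constraint $RS=\mu PQ$, absorbed by fixing one of the four variables). Finally select $a,b,c,d$ to satisfy \eqref{thmcon1}, the two transformed equations $\phi(ab/P)\phi(cd/Q)=\delta_L$ and $\phi(ad/R)\phi(bc/S)=\delta_R$, and (in the $n$ even case) the additional constraint $abcd=\mu PQRS$: at most four equations in four unknowns, each of which may be arranged by applying the surjectivity of $\phi$ to a quadratic in the currently free variable.

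The hard part will be bookkeeping in the degenerate cases. When one of $\delta,\delta_L,\delta_R,\kappa_L,\kappa_R$ vanishes, the corresponding factorisation forces some $\phi$-value to be zero, restricting an associated variable to a root of $t^{2}+1=0$ and reducing the freedom available downstream. These cases should be dealt with by reordering the choices above so that any forced zero falls into a factor which has not yet been constrained; once more the surjectivity of $\phi$ provides enough slack to close every such branch.
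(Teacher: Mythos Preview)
The paper gives no proof of this corollary at all; it is stated immediately after Theorem~\ref{thm:rep} and left without argument, presumably as a routine consequence of the algebraic closedness of $\Field$. Your proposal therefore already goes well beyond what the paper offers.

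Your outline is sound in spirit: the surjectivity of $\phi(t)=t+t^{-1}$ is the right engine, the substitution $P=ab/x$, $Q=cd/y$, $R=ad/w$, $S=bc/z$ is a sensible triangularisation, and the staged solution strategy is plausible. However, what you have written is explicitly a \emph{plan} rather than a proof. You yourself flag the degenerate cases (some $\phi$-factor forced to vanish) as ``the hard part'' and say only that they ``should be dealt with by reordering the choices''; this is not carried out. Even in the generic branch, the step ``select $a,b,c,d$ to satisfy \ldots\ at most four equations in four unknowns, each of which may be arranged by applying the surjectivity of $\phi$'' hides a nontrivial verification: the four conditions on $a,b,c,d$ are coupled (for instance $\phi(ab/P)\phi(cd/Q)=\delta_L$ and $\phi(ad/R)\phi(bc/S)=\delta_R$ both involve all four variables), so one must check that solving them sequentially does not run into an obstruction. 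This can be done --- each step reduces to a quadratic with nonzero constant term --- but you have not actually done it.

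In short: the approach is correct and more explicit than anything in the paper, but as written it is a sketch with acknowledged gaps, not a finished argument.
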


With this action, the tensor space $V_2^{\otimes 4n}$ becomes a (left) $\sba$-module.  Write $\themodule$ for this module.

\subsection{Examples and some More Notation}

\begin{example}  \label{example:V1}
The $\sba[1]$-module $\mathcal{V}(1)$ has basis
\begin{displaymath}
\left\{  \uline{1111}, \uline{1112}, \uline{1121}, \ldots, \uline{2221}, \uline{2222} \right\} \rm{.}
\end{displaymath}

The algebra $\sba[1]$ is generated by the elements $e$ and $f$.  Both these elements act like $0$ on ten of the basis vectors of $\mathcal{V}(1)$, namely those in the sets
\begin{displaymath}
\left\{  \uline{1111}  \right\} \rm{,} \\
\left\{  \uline{1112}, \uline{1121}, \uline{1211}, \uline{2111}  \right\} \rm{,} \\
\left\{  \uline{2221}, \uline{2212}, \uline{2122}, \uline{1222}  \right\} \rm{,} \\
\left\{  \uline{2222}  \right\} \rm{.}
\end{displaymath}

Consider the subspace of $\mathcal{V}(1)$ spanned by the remaining basis vectors, ordered lexicographically as
\begin{displaymath}
\left\{ \uline{1122}, \uline{1212}, \uline{1221}, \uline{2112}, \uline{2121}, \uline{2211} \right\} \rm{.}
\end{displaymath}
The element $e$ acts on this subspace as the matrix
\begin{displaymath}
\begin{pmatrix} 0 & & & & & \\
                & xy & x & y & 1 & \\
                & x & \frac{x}{y} & 1 & \frac{1}{y} &  \\
                & y & 1 & \frac{y}{x} & \frac{1}{x} & \\
                & 1 & \frac{1}{y} & \frac{1}{x} & \frac{1}{xy} & \\
                & & & & & 0 \end{pmatrix}  \rm{,}
\end{displaymath}
while the element $f$ acts on this subspace as the matrix
\begin{displaymath}
\begin{pmatrix} \frac{z}{w} & \frac{1}{w} & & & z & 1 \\
                \frac{1}{w} & \frac{1}{zw} & & & 1 & \frac{1}{z} \\
                & & 0 & & & \\
                & & & 0 & & \\
                z & 1 & & & wz & w \\
                1 & \frac{1}{z} & & & w & \frac{w}{z}
\end{pmatrix}  \rm{.}
\end{displaymath}

These two matrices are both projections onto a rank $1$ subspace.  An eigenvector to eigenvalue $1$ of $e$ is
\begin{displaymath}
 \frac{1}{[2]_x[2]_y} \left( x \uline{12} + \uline{21} \right) \otimes \left( y \uline{12} + \uline{21} \right)  \rm{.}
\end{displaymath}

An eigenvector to eigenvalue $1$ of $f$ is 
\begin{displaymath}
\frac{1}{[2]_z [2]_w} \left( \uline{1} \otimes \left( w \uline{12} + \uline{21} \right) \otimes \uline{2} + w \uline{2} \otimes \left( w \uline{12} + \uline{21} \right) \otimes \uline{1} \right) \rm{.}
\end{displaymath}
\end{example}

\begin{example} \label{example:V2}
The $\sba[2]$-module $\mathcal{V}(2)$ has basis
\begin{displaymath}
\left\{  \uline{11111111}, \uline{11111112}, \uline{11111121}, \ldots, \uline{22222221}, \uline{22222222} \right\} \rm{.}
\end{displaymath}

The element $e \in \mathcal{V}(2)$ acts on this (ordered) basis as the matrix
\begin{align*}
\mathcal{R}(e) &= R^x_{-2} R^y_{2} \rm{.}
\end{align*}

The element $f \in \mathcal{V}(2)$ acts on this (ordered) basis as the matrix
\begin{align*}
\mathcal{R}(f) &= R^z_{0} R^w_{4} \rm{.}
\end{align*}

Both these matrices are projections onto a rank $4$ subspace of $\mathcal{V}(2)$ (compare to Example \ref{example:V1}).

A direct calculation then shows that an eigenvector to eigenvalue $1$ of $\mathcal{R}(e)$ is of the form
\begin{align*}
\frac{1}{[2]_x[2]_y} \left( \uline{v_{-3}} \otimes \left( x \uline{12} + \uline{21} \right) \otimes \uline{v_{0} v_{1}} \otimes \left( y \uline{12} + \uline{21} \right) \otimes \uline{v_{4}} \right)  \rm{,}
\end{align*}
while an eigenvector to eigenvalue $1$ of $\mathcal{R}(f)$ is of the form
\begin{align*}
\frac{1}{[2]_z [2]_w} \left( \uline{1} \otimes \uline{v_{-2} v_{-1}} \otimes \left( w \uline{12} + \uline{21} \right) \otimes \uline{v_{2} v_{3}} \otimes \uline{2} \right. \\
  + \left. w \uline{2} \otimes \uline{v_{-2} v_{-1}} \otimes \left( w \uline{12} + \uline{21} \right) \otimes \uline{v_{2} v_{3}} \otimes \uline{1} \right) \rm{.}
\end{align*}
\end{example}

These examples motivate the following definition.

\begin{define} \label{def:tildenotation}
Let $q \in \Field^{\times}$.

Define $\uline{\widetilde{12}^{q}} \in V_2^{\otimes 2}$ by
\begin{displaymath}
\uline{\widetilde{12}^{q}} := q \uline{12} + \uline{21} \rm{.}
\end{displaymath}

If no ambiguity results, let 
\begin{displaymath}
\uline{x} \otimes \uline{\widetilde{12}^{q}} \otimes \uline{y} = \uline{x \widetilde{12}^{q} y} \rm{.}
\end{displaymath}

Here it should be understood that $\uline{x}$ and $\uline{y}$ may be either standard basis vectors or linear combinations of the form $\uline{\widetilde{12}^{q}}$.
\end{define}

To establish the use of this notation, consider again the eigenvectors calculated in the previous examples.

\begin{example}
With this notation established, the eigenvector of $e$ to eigenvalue $1$ given in Example \ref{example:V1} can be written as
\begin{displaymath}
\frac{1}{[2]_x [2]_y} \uline{\widetilde{12}^{x} \widetilde{12}^{y}} \rm{.}
\end{displaymath}

The eigenvector of $e$ to eigenvalue $1$ given in Example \ref{example:V2} can be written as
\begin{displaymath}
\frac{1}{[2]_x [2]_y} \uline{v_{-3} \widetilde{12}^{x} v_{0} v_{1} \widetilde{12}^{y} v_{4}} \rm{.}
\end{displaymath}
\end{example}

\begin{example}  \label{example:squarebracketnotation}
By further direct calculations, it can be seen that for $\uline{\widetilde{12}^{a} \widetilde{12}^{b}}  \in \mathcal{V}(1)$ and $x,y \in \Field^{\times}$,
\begin{align*}
R^{x}_{0} \uline{\widetilde{12}^{a} \widetilde{12}^{b}} &= ab \uline{1122} + \frac{ab}{x} \uline{1212} + x \uline{2121} + \uline{2211} \rm{,} \\
R^{y}_{2} \uline{\widetilde{12}^{a} \widetilde{12}^{b}} &= \uline{1122} + \frac{ab}{y} \uline{1212} + y \uline{2121} + ab \uline{2211} \rm{.}
\end{align*}
\end{example}

This example motivates some more notation.

\begin{define}
Define $\uline{[1122]^{q,p}}$ and $\uline{11]^{q,p}[22} \in V_2^{\otimes 4}$ by

\begin{align*}
\uline{[1122]^{q,p}} &= q \uline{1122} + \frac{q}{p} \uline{1212} + p \uline{2121} + \uline{2211} \rm{,} \\
\uline{22]^{q,p}[11} &= \uline{1122} + \frac{q}{p} \uline{1212} + p \uline{2121} + q \uline{2211} \rm{.}
\end{align*}

As in Definition \ref{def:tildenotation}, when no ambiguity results we may write
\begin{displaymath}
\uline{x [1122]^{q,p} y} = \uline{x} \otimes \uline{[1122]^{q,p}} \otimes \uline{y} \rm{,}
\end{displaymath}
and 
\begin{displaymath}
\uline{22]^{q,p} w [11} = \uline{11w22} + \frac{q}{p} \uline{12w12} + p \uline{21w21} + q \uline{22w11} \rm{.}
\end{displaymath}
\end{define}

\begin{example}
The eigenvector of $f$ to eigenvalue $1$ given in Example \ref{example:V1} can be written as
\begin{displaymath}
\frac{w}{[2]_z [2]_w} \uline{[1122]}^{\frac{z}{w},z} = \frac{z}{[2]_z [2]_w} \uline{11]^{\frac{w}{z},w}[22} \rm{.}
\end{displaymath}
\end{example}

It will later prove useful to have the following map.

\begin{define}  \label{def:umapsfromV}
Order $\{1,2\}^{4n}$ lexicographically, so that the sequence $11\ldots1$ is the smallest element and $22\ldots2$ is the greatest element.

Let $w \in \mathcal{V}(n)$.  Then 
\begin{displaymath}
w = \sum_{v \in \{1,2\}^{4n}} A_{w,v} \uline{v} \rm{,}
\end{displaymath}
for some $A_{w,v} \in \Field$.

Now define a map $u: \mathcal{V}(n) \setminus \{ 0 \} \rightarrow \{1,2\}^{4n}$ as follows:

For any non-zero $w \in \mathcal{V}(n)$, let
\begin{displaymath}
 u(w) = \min \{ v \in \{1,2\}^{4n} \; | \; A_{w,v} \neq 0 \} \rm{.}
\end{displaymath}
\end{define}

Note in particular that $u(\uline{v}) = v$ for any $v \in \{1,2\}^{4n}$ and that $u(\widetilde{\uline{12}}^q) = 12$.

\subsection{Permutation Modules}

As Example \ref{example:V1} suggests, the $\sba$-module $\mathcal{V}(n)$ has a natural decomposition into `permutation modules'.  These modules are defined as follows.

\begin{define}  \label{def:permutationmodule}
Let $n\in \mathbb{N}$.  Let $s \in \mathbb{Z}$ such that $-2n \leq s \leq 2n$.  Let $M_n(s)$ be the subspace of $\mathcal{V}(n)$ containing all vectors with $(2n+s)$ $\uline{1}$s and $(2n-s)$ $\uline{2}$s.
\end{define}

\begin{proposition}  \label{prop:permutationmodules}
Let $n,r \in \mathbb{N}$.  Then
\begin{align}
\sba \circ M_n(r) &\subset M_n(r) \rm{.}
\end{align}
That is, each $M_n(r)$ is a submodule of $\themodule$.

The module $\themodule$ has the following direct sum decomposition:
\begin{align}
\themodule &= \bigoplus_{-2n \leq s \leq 2n} M_n(s) \rm{.}
\end{align}
\end{proposition}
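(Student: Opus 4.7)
The plan is to dispatch the direct sum decomposition and the submodule claim separately, the former being essentially trivial and the latter reducing to a one-line check on the $R$-matrices.

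First, for the decomposition: the standard basis $\{\uline{v} : v \in \{1,2\}^{4n}\}$ of $\mathcal{V}(n)$ partitions according to the number of symbols equal to $1$ in the word $v$. Writing this number as $2n+s$ for some $-2n \leq s \leq 2n$ gives a partition of the basis indexed by $s$, and $M_n(s)$ is by definition the span of the $s$-th block. Hence $\mathcal{V}(n) = \bigoplus_s M_n(s)$ at the level of vector spaces, independently of any module structure.

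For the submodule claim, it suffices to check stability under the generators $\mathcal{G}_n = \{e, U_1, \ldots, U_{n-1}, f\}$. By Theorem \ref{thm:rep} (equations \eqref{tsr:eq1}--\eqref{tsr:eq3}), each generator acts on $\mathcal{V}(n)$ as a product of operators of the form $R_i^q$. The whole proposition therefore reduces to the single observation that every $R_i^q$ preserves the number of $1$s in each basis vector.

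This observation is read off Definition \ref{def:Rmatrices}: for $i \in I_n \setminus \{2n\}$, the operator $R_i^q$ annihilates a basis vector $\uline{\alpha}$ unless $\alpha_i \neq \alpha_{i+1}$, and otherwise produces a linear combination of two basis vectors which agree with $\uline{\alpha}$ outside positions $i,i+1$ and display $12$ and $21$ respectively at those two positions. Each output has exactly one $1$ and one $2$ in the modified slots, matching the input pair $\{\alpha_i, \alpha_{i+1}\} = \{1,2\}$, so the total count of $1$s is preserved. The cyclic boundary operator $R_{2n}^q$ is handled identically, with $(\alpha_{2n}, \alpha_{-2n+1})$ playing the role of $(\alpha_i, \alpha_{i+1})$. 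I do not expect any step to present a genuine obstacle: the entire content of the proposition is the combinatorial fact that the $R$-matrices preserve the weight grading by symbol count, and the reduction to that fact is automatic from the definitions.
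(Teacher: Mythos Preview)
Your proposal is correct and follows essentially the same approach as the paper: both reduce the submodule claim to the observation that each $R_i^q$ preserves the number of $1$s (and $2$s) in a basis vector, and both note that the direct sum decomposition is immediate from the partition of the standard basis by this count. Your version simply spells out the verification for $R_i^q$ in slightly more detail than the paper does.
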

\begin{proof}
Let $v \in \{1,2\}^{4n}$.  Multiplying $\uline{v}$ by matrices of the form $R^{q}_i$ does not change the number of $\uline{1}$s or $\uline{2}$s.  This suffices to prove the first claim.  But every vector $v \in \themodule$ is an element of some permutation module $M_n(r)$.  So the second claim follows at once.
\end{proof}

\subsection{Localisation of $\themodule$}

In this section, write $\mathcal{V}(n;\uline{\Sigma})$ for the module $\themodule$ defined in Theorem \ref{thm:rep} with parameters $\uline{\Sigma} = (a,b,c,d,x,y,z,w)$.

\begin{proposition}  \label{prop:Viso}
Let $\mathcal{V}(n;\uline{\Sigma})$ be the $\sba(\uline{\Pi})$-module defined in Theorem \ref{thm:rep}.  

Let $\uline{\Sigma_L} := (a,b,c,d,\frac{ab}{x},\frac{cd}{y},z,w)$ and $\uline{\Sigma_R} = (a,b,c,d,x,y,\frac{bc}{z},\frac{ad}{w})$.

Then $\mathcal{V}(n-1;\uline{\Sigma_L})$ is a $\sba[n-1,L]$-module and $\mathcal{V}(n-1;\uline{\Sigma_R})$ is a $\sba[n-1,R]$-module.  Moreover
\begin{align}
F_e^{n} \mathcal{V}(n;\uline{\Sigma}) &\cong \mathcal{V}(n-1;\uline{\Sigma_L}) \\
F_f^{n} \mathcal{V}(n;\uline{\Sigma}) &\cong \mathcal{V}(n-1;\uline{\Sigma_R}) \rm{.}
\end{align}
\end{proposition}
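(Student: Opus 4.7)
I would first verify that $\mathcal{V}(n-1;\uline{\Sigma_L})$ really is a $\sba[n-1,L]$-module, which amounts to substituting $\uline{\Sigma_L}=(a,b,c,d,ab/x,cd/y,z,w)$ into conditions (\ref{thmcon1})--(\ref{thmcon6}) of Theorem \ref{thm:rep} and checking that the resulting algebra parameters are exactly $\uline{\Pi_L}=(\delta,\kappa_L,\delta_R,\delta_L,\kappa_R,\kappa)$; the analogous calculation handles $\uline{\Sigma_R}$ and $\uline{\Pi_R}$. Next, $F_e^n\mathcal{V}(n;\uline{\Sigma})=\bar{e}\mathcal{V}(n;\uline{\Sigma})$, and since $\mathcal{R}(e)=R^x_{-n}R^y_n$ and the image of each $R^q_i$ consists of vectors carrying $\widetilde{12}^q$ at positions $(i,i+1)$, a basis for $\bar{e}\mathcal{V}(n;\uline{\Sigma})$ consists of vectors of the form $\uline{w_{-2n+1}\ldots w_{-n-1}\widetilde{12}^x w_{-n+2}\ldots w_{n-1}\widetilde{12}^y w_{n+2}\ldots w_{2n}}$ with each $w_k\in\{1,2\}$.

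Define a $\Field$-linear map $\phi:\mathcal{V}(n-1;\uline{\Sigma_L})\to\bar{e}\mathcal{V}(n;\uline{\Sigma})$ on basis vectors by splitting each $\uline{\beta}\in\mathcal{V}(n-1)$ (whose positions are indexed by $\{-2n+3,\ldots,2n-2\}$) into three consecutive blocks of lengths $(n-1),(2n-2),(n-1)$, placing them into the free position blocks $\{-2n+1,\ldots,-n-1\}$, $\{-n+2,\ldots,n-1\}$, $\{n+2,\ldots,2n\}$ of $\mathcal{V}(n)$, and inserting $\widetilde{12}^x$ at $(-n,-n+1)$ and $\widetilde{12}^y$ at $(n,n+1)$. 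By dimension, $\phi$ is a $\Field$-linear isomorphism. The real task is to verify that $\phi$ intertwines the $\sba[n-1,L]$-action. By Proposition \ref{prop:twoisomorphsisms}, the generators $\{e,U_1,\ldots,U_{n-2},f\}$ of $\sba[n-1,L]$ correspond in $\bar{e}\sba\bar{e}$ to $\{\bar{e}U_1\bar{e},\bar{e}U_2\bar{e},\ldots,\bar{e}U_{n-1}\bar{e},\bar{e}f\bar{e}\}$. A direct position check shows that for every $U_j\in\sba[n-1,L]$ with $j\geq 1$ and for $f$, all constituent $R$-matrices of the corresponding $\sba$-element act on positions entirely within a single free block of $\phi(\beta)$ (using also the wrap-around matching $R^w_{2n-2}\leftrightarrow R^w_{2n}$ for the $f$ case); the conjugation by $\bar{e}$ is therefore trivial, and the intertwining reduces to a position comparison exploiting that $\uline{\Sigma_L}$ shares the parameters $a,b,c,d,z,w$ with $\uline{\Sigma}$.

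The principal obstacle is the intertwining for $e\in\sba[n-1,L]$, which corresponds to $\bar{e}U_1\bar{e}$. Here the four $R$-matrices of $U_1\in\sba$ straddle the inserted blocks: $R^a_{-n-1}$ touches the left edge of the $\widetilde{12}^x$-block, $R^b_{-n+1}$ touches its right edge, and analogously $R^c_{n-1},R^d_{n+1}$ straddle the $\widetilde{12}^y$-block. The key algebraic identity, generalising Example \ref{example:squarebracketnotation}, asserts that on any vector carrying $\widetilde{12}^x$ at $(-n,-n+1)$ the composition $R^x_{-n}R^b_{-n+1}R^a_{-n-1}R^x_{-n}$ equals, up to the scalar $\delta_L=[2]_x[2]_y$, the operator that transforms the pair of letters at positions $(-n-1,-n+2)$ by the rule prescribed by $R^{ab/x}$ and restores a $\widetilde{12}^x$-block at $(-n,-n+1)$. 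Combined with the symmetric identity on the right producing the parameter $cd/y$, this establishes $\bar{e}U_1\bar{e}\cdot\phi(v)=\phi(e\cdot v)$. The isomorphism $F_f^n\mathcal{V}(n;\uline{\Sigma})\cong\mathcal{V}(n-1;\uline{\Sigma_R})$ is proved by the symmetric argument in which the absorbed positions are $(0,1)$ and $(-2n+1,2n)$, the insertions involve $\widetilde{12}^z$ and a wrap-around analogue for the edge-wrapping $R$-matrix $R^w_{2n}$, the non-trivial intertwining is now for the generator $f\in\sba[n-1,R]$, and the parameter transformation yields $\uline{\Sigma_R}=(a,b,c,d,x,y,bc/z,ad/w)$.
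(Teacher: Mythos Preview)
Your proposal is correct and follows essentially the same approach as the paper: verify the parameter compatibility via Theorem~\ref{thm:rep}, define the explicit $\Field$-linear bijection that inserts $\widetilde{12}^{x}$ at positions $(-n,-n+1)$ and $\widetilde{12}^{y}$ at $(n,n+1)$, and then check the intertwining generator by generator, with the only genuinely non-trivial case being the one where $U_1\in\sba$ straddles the inserted blocks. Your identification of the hard generator as $e\in\sba[n-1,L]\leftrightarrow \bar{e}U_1\bar{e}$ is in fact cleaner than the paper's own wording (which labels the hard case as ``$g=U_1$'' while carrying out exactly the $\bar{e}U_1\bar{e}$ computation you describe).
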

\begin{proof}
It is easily checked that the parameters $\Sigma_L$ and $\Sigma_R$ satisfy the conditions of Theorem \ref{thm:rep} for $\uline{\Pi_L} = (\delta,\kappa_L,\delta_R,\delta_L,\kappa_R,\kappa)$ and $\uline{\Pi_R} = (\delta,\delta_L,\kappa_R,\kappa_L,\delta_R,\kappa)$ respectively.  So $\mathcal{V}(n-1;\uline{\Sigma_L})$ and $\mathcal{V}(n-1;\uline{\Sigma_R})$ are modules as claimed.

It will be sufficient to prove only the first of the claimed isomorphisms.  The proof of the second is very similar.

Note that $F_e^{n}$ acts on $\mathcal{V}(n;\uline{\Sigma})$ as multiplication by the idempotent
\begin{align}
\mathcal{R}^{n}(e) &= \frac{1}{[2]_x [2]_y} R^x_{-n} R^y_{n} \rm{.}
\end{align}
This matrix acts like the identity on all but four tensor factors of $\mathcal{V}(n;\uline{\Sigma})$.  These are the four factors labeled $(-n)$,$(-n+1)$,$(n)$ and $(n+1)$.  On these factors $\mathcal{R}^{n}(e)$ acts as a rank $1$ projection.  An eigenvector of $\mathcal{R}^{n}(e)$ to eigenvalue $1$ is
\begin{align}
\ldots \otimes \left( x \uline{12} + \uline{21} \right) \otimes \ldots \otimes \left( y \uline{12} + \uline{21} \right) \otimes \ldots \rm{.}
\end{align}

Define an $\Field$-linear map
\begin{displaymath}
\theta : \mathcal{V}(n-1;\uline{\Sigma_L}) \rightarrow F_e^{n} \mathcal{V}(n;\uline{\Sigma})	
\end{displaymath}
as follows: if $\uline{\alpha} =  \uline{\alpha_{-2(n-1)+1} \ldots \alpha_{2(n-1)}} \in \mathcal{V}(n-1;\uline{\Sigma_L})$, then
\begin{align}
\theta(\uline{\alpha}) = \uline{\alpha_{-2(n-1)+1} \ldots \alpha_{-(n-1)} \widetilde{12}^{x} \alpha_{-(n-2)} \ldots \alpha_{(n-1)} \widetilde{12}^{y} \alpha_{n} \ldots \alpha_{2(n-1)}} \rm{.}
\end{align}

This map $\theta$ is clearly a bijection.  It is therefore a module isomorphism iff it is a module homomorphism.  

Let $\eta : \bar{e} \sba(\uline{\Pi}) \bar{e} \cong \sba[n-1](\uline{\Pi_L})$ be the isomorphism \eqref{eq:eisomorph} given in Proposition \ref{prop:twoisomorphsisms}.

Then $\theta$ is a homomorphism if, for every basis vector $\uline{\alpha} \in \mathcal{V}(n-1;\uline{\Sigma_L})$ and every $g \in \mathcal{G}_{n-1} = \{ e, U_1, U_2, \ldots, U_{n-2}, f \}$,
\begin{align*}
\theta(g \uline{\alpha}) &= \eta(g) \theta (\uline{\alpha}) \rm{.}
\end{align*}

It is not difficult to check that this holds for $g \in \{e, U_2, U_3, \ldots, U_{n-2}, f\}$.  If $g = U_1$ then $\eta(g) = e U_1 e$.  A direct calculation shows that
\begin{align*}
U_1 \theta(\uline{\alpha}) &= Q \uline{\alpha_{-2(n-1)+1} \ldots \widetilde{12}^{a} \widetilde{12}^{b} \ldots \widetilde{12}^{c} \widetilde{12}^{d} \ldots \alpha_{2(n-1)}} \rm{,}
\end{align*}
for $Q$ a function of $\alpha, a, b, c, d, x, y$ given by 
\begin{displaymath}
Q = \delta^{\prime}(\alpha_{-(n-1)},\alpha_{-(n-2)}) \delta^{\prime}(\alpha_{(n-1)},\alpha_{n}) \left( \frac{x}{ab} \right)^{\alpha_{-(n-1)} -1} \left( \frac{y}{cd} \right)^{\alpha_{(n-1)} - 1} \rm{,}
\end{displaymath}
where, for any $x,y \in \{1,2\}$,
\begin{displaymath}
\delta^{\prime}(x,y) = \left\{ \begin{array}{cl}
                               0 & \mbox{if } x = y \\
                               1 & \mbox{if } x \neq y 
                               \end{array}  \right.  \rm{.}
\end{displaymath}

Therefore
\begin{align*}
\eta(U_1) \theta(\uline{\alpha}) &= e U_1 \theta(\uline{\alpha}) \\
  &= \delta^{\prime}(\alpha_{-(n-1)},\alpha_{-(n-2)}) \delta^{\prime}(\alpha_{(n-1)},\alpha_{n}) \left( \frac{x}{ab} \right)^{2-\alpha_{-(n-1)}} \left( \frac{y}{cd} \right)^{2 - \alpha_{n-1}} \Theta(\uline{\alpha}) \rm{,}
\end{align*}
where
\begin{align*}
\Theta(\uline{\alpha}) &= \uline{\alpha_{-2(n-1)+1}  \alpha_{-n}} \otimes \uline{\mathfrak{v}_L} \otimes \uline{ \alpha_{-(n-3)} \ldots \alpha_{(n-2)}} \otimes \uline{\mathfrak{v}_R} \otimes \uline{\alpha_{n+1} \ldots \alpha_{2(n-1)}} \rm{,} \\
\uline{\mathfrak{v}_L} &= \frac{ab}{x} \uline{1 \widetilde{12}^x 2} + \uline{2 \widetilde{12}^{x} 1} \rm{,} \\
\uline{\mathfrak{v}_R} &= \frac{cd}{y} \uline{1 \widetilde{12}^y 2} + \uline{2 \widetilde{12}^{y} 1} \rm{.}
\end{align*}

Conversely, $U_1 \in \sba{n-1}(\uline{\Pi_L})$ acts on $\uline{\alpha} \in \mathcal{V}(n-1;\uline{\Sigma_L})$ as
\begin{align*}
U_1 \uline{\alpha} &= \delta^{\prime}(\alpha_{-(n-1)},\alpha_{-(n-2)}) \delta^{\prime}(\alpha_{(n-1)},\alpha_{n}) \uline{\beta} \rm{,} \\
\uline{\beta} &= \uline{ \alpha_{-2n-1} \ldots \alpha_{-(n-2)} \widetilde{12}^{\frac{ab}{x}} \alpha_{-(n-3)} \ldots \alpha_{(n-2)} \widetilde{12}^{\frac{cd}{y}} \alpha_{(n+1)} \ldots \alpha_{2(n-1)}}
\end{align*}
and so it is easily checked that 
\begin{align*}
\theta(U_1 \uline{\alpha}) &= \delta^{\prime}(\alpha_{-(n-1)},\alpha_{-(n-2)}) \delta^{\prime}(\alpha_{(n-1)},\alpha_{n}) \left( \frac{x}{ab} \right)^{2-\alpha_{-(n-1)}} \left( \frac{y}{cd} \right)^{2 - \alpha_{n-1}} \Theta(\uline{\alpha}) \\
 &= \eta(U_1) \theta(\uline{\alpha}) \rm{.}
\end{align*}

But this means that $\theta$ is a module isomorphism, as required.
\end{proof}

\subsection{An `Adjointness Map'} 

\begin{proposition}  \label{prop:imageunderpsi}
The image $\psi_n \left( \left( G_e^{n} \circ F_e^{n} \right) \mathcal{V}(n) \right)$ is spanned by elements from the three sets below.
\begin{displaymath}
\left\{ \uline{ v_L \widetilde{12}^{x} v_M \widetilde{12}^{y} v_R } \; | \; v_L, v_R \in \{ 1, 2 \}^{n-1} \; v_M \in \{ 1, 2 \}^{2n-2} \right\}  \rm{,}
\end{displaymath}
\begin{displaymath}
\left\{ \uline{ v_1 \widetilde{12}^{a} v_2 \widetilde{12}^{b} v_3 \widetilde{12}^{c} v_4 \widetilde{12}^{d} v_5} \; | \; 2 \leq i \leq n \; v_1, v_5 \in \{ 1, 2 \}^{n-i}, v_2, v_4 \in \{ 1, 2 \}^{2(i-2)}, v_3 \in \{ 1,2 \}^{2(n-i)} \right\} \rm{,}
\end{displaymath}
\begin{displaymath}
\left\{ \uline{22]^{(ad,w)} v_{L} [1122]^{(bc,z)} v_{R} [11} \; | \; v_L, v_R \in \{ 1,2 \}^{2n-4} \right\} \rm{.}
\end{displaymath}
\end{proposition}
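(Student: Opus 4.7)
The strategy is to combine Proposition \ref{prop:whatispsi} with Lemma \ref{lemma:sbaeasrightmodule}: since $\psi_n(x \otimes v) = xv$ and $\sba \bar{e}$ is generated as a right $\bar{e} \sba \bar{e}$-module by $\mathcal{X}_n = \{ x_0, \ldots, x_n \}$, the image of $\psi_n$ is the $\Field$-span of the vectors $\{ x_k v \}$ as $k$ ranges over $\{0, 1, \ldots, n\}$ and $v$ ranges over a spanning set of $F_e^{n} \mathcal{V}(n) = \bar{e} \mathcal{V}(n)$.  By the construction of the isomorphism $\theta$ in the proof of Proposition \ref{prop:Viso}, $\bar{e} \mathcal{V}(n)$ has such a spanning set consisting of the vectors $\uline{v_L \widetilde{12}^{x} v_M \widetilde{12}^{y} v_R}$ of the first claimed form.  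The plan is to compute $x_k v$ on such a $v$ for each $k$ and identify the result with elements of one of the three sets.

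For $k=0$, $x_0 = e$ acts as the scalar $\delta_L$ on $v \in \bar{e} \mathcal{V}(n)$, so $x_0 v$ lies in the first set.  For $1 \leq k \leq n-1$, $x_k = U_k U_{k-1} \cdots U_1 e$, and I would prove by induction on $k$ that $x_k v$ is a linear combination of second-form vectors with index $i = k+1$.  The base case $k=1$ is essentially the computation performed for $U_1 \theta(\uline{\alpha})$ in the proof of Proposition \ref{prop:Viso}: the result has four brackets $\widetilde{12}^{a}, \widetilde{12}^{b}, \widetilde{12}^{c}, \widetilde{12}^{d}$ at positions $(-n-1,-n)$, $(-n+1,-n+2)$, $(n-1,n)$, $(n+1,n+2)$, matching the second form with $i=2$.

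For the inductive step, suppose $x_{k-1} v$ is a linear combination of second-form vectors with index $k$, so that the four brackets occupy positions $(-n-k+1,-n-k+2)$, $(-n+k-1,-n+k)$, $(n-k+1,n-k+2)$, $(n+k-1,n+k)$.  The four factors of $\mathcal{R}(U_k) = R^{a}_{-n-k} R^{b}_{-n+k} R^{c}_{n-k} R^{d}_{n+k}$ each straddle one of these brackets and an adjacent factor of $v_1$, $v_3$, $v_3$, or $v_5$.  Using the elementary identities $R^{q}_i \uline{12} = \uline{\widetilde{12}^{q}}$ and $R^{q}_i \uline{21} = q^{-1}\uline{\widetilde{12}^{q}}$, each $R$-matrix shifts the bracket by one position, shortening the adjacent $v_j$ by one letter and lengthening $v_2'$ or $v_4'$ by one; the new boundary letter takes the $\{1,2\}$-value consumed from the adjacent factor.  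Summing over the free letters in $v_1, v_3, v_5$ produces all second-form vectors with index $k+1$.  For $k=n$, $x_n v = f \cdot x_{n-1} v$, and by the above $x_{n-1} v$ is a linear combination of second-form vectors with $i=n$, in which $v_1, v_3, v_5$ are empty and the brackets sit at $(-2n+1,-2n+2)$, $(-1,0)$, $(1,2)$, $(2n-1,2n)$.  Applying $\mathcal{R}(f) = R^{z}_0 R^{w}_{2n}$: the factor $R^{z}_0$ acts on positions $(0,1)$ (the right half of $\widetilde{12}^{b}$ and the left half of $\widetilde{12}^{c}$), and by the first formula of Example \ref{example:squarebracketnotation} converts $\uline{\widetilde{12}^{b} \widetilde{12}^{c}}$ into $\uline{[1122]^{(bc,z)}}$.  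The wrap-around factor $R^{w}_{2n}$ acts on positions $(2n,-2n+1)$ (the right half of $\widetilde{12}^{d}$ and the left half of $\widetilde{12}^{a}$), and by the cyclic analogue of the second formula of the same example converts $\uline{\widetilde{12}^{a} \cdots \widetilde{12}^{d}}$ into $\uline{22]^{(ad,w)} \cdots [11}$, giving the third form.

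The main obstacle is the combinatorial bookkeeping in the inductive step.  Although each individual $R$-matrix action is elementary, one must handle the $\{1,2\}$-values at the four shifting boundaries case by case, track the scalar factors coming from the $\widetilde{12}^{q}$-components correctly, and verify that as the consumed positions of $v_1, v_3, v_5$ vary over $\{1,2\}$ one really sweeps out arbitrary values at the newly created boundary letters of $v_2'$ and $v_4'$.  This amounts essentially to an iterated version of the $U_1$ calculation from the proof of Proposition \ref{prop:Viso}, with careful attention to indices; no conceptually new ingredient is needed.
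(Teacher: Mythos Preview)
Your proposal is correct and follows essentially the same approach as the paper: use Lemma \ref{lemma:sbaeasrightmodule} to see that the image of $\psi_n$ is spanned by $\{x_k v\}$, then compute $x_k v$ inductively on $k$, with $k=0$ giving the first set, $1 \leq k \leq n-1$ giving the second set (indexed by $i=k+1$), and $k=n$ giving the third set via Example \ref{example:squarebracketnotation}. The only minor difference is that the paper starts from an arbitrary standard basis element $v \in \mathcal{V}(n)$ (so that $ev$ is either $0$ or a scalar multiple of a first-form vector), whereas you start directly from $v \in \bar{e}\mathcal{V}(n)$ using the description from Proposition \ref{prop:Viso}; your choice is slightly cleaner and your inductive bookkeeping of the bracket positions is more explicit than the paper's ``further calculations show''.
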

\begin{proof}
Let $v = \uline{\alpha_{-2n+1} \ldots \alpha_{2n}}$ be a standard basis element of $\mathcal{V}(n)$.

Let $g_0 = e$ and for $i \in \{1,2, \ldots, n-1\}$ let $g_i = U_i g_{i-1}$.  Let $g_n = f g_{n-1}$.

By Proposition \ref{prop:whatispsi}, $(G_e^{n} \circ F_e^{n}) \mathcal{V}(n)$ is spanned by elements of the form $g_k \otimes v$ for $0 \leq k \leq n$.  Therefore $\psi_n \left( (G_e^{n} \circ F_e^{n}) \mathcal{V}(n) \right)$ is spanned by elements of the form $g_k v$.

A simple calculation shows that $g_0v = ev$ is either $0$ or a scalar multiple of $\uline{ v_L \widetilde{12}^{x} v_M \widetilde{12}^{y} v_R }$, with $v_L, v_R \in \{ 1, 2 \}^{n-1}$ and $v_M \in \{ 1, 2 \}^{2n-2}$.

Further calculations show that, for $1 \leq k < n$, $g_k v = U_{n-1} (g_{k-1} v)$ is either $0$ or a scalar multiple of some vector $\uline{ v_1 \widetilde{12}^{a} v_2 \widetilde{12}^{b} v_3 \widetilde{12}^{c} v_4 \widetilde{12}^{d} v_5}$ of the required from.

Finally, $g_n v$ is either $0$ or of the form 
\begin{align*}
f (g_{n-1} v) &= f \left( \widetilde{12}^{a} v_2 \widetilde{12}^{b} \widetilde{12}^{c} v_4 \widetilde{12}^{d} \right) \\
 &= \uline{22]^{(ad,w)} v_{L} [1122]^{(bc,z)} v_{R} [11} \rm{,}
\end{align*}
by a direct calculation in the spirit of Example \ref{example:squarebracketnotation}.
\end{proof}

\begin{define}
Let $n \in \mathbb{N}$.  

Recall that $\epsilon_{-}$ is the counit of the adjunction $G_e^{n} \dashv F_e^{n}$.

Let $\mathcal{V}(n) = \themodule$ and let $\psi_n = \epsilon_{\mathcal{V}(n)}$, so that by Proposition \ref{prop:whatispsi}
\begin{align*}
\psi_n : \left( G_e^{n} \circ F_e^{n} \right) \mathcal{V}(n) &\rightarrow \mathcal{V}(n) \\
g_k \otimes v &\mapsto g_k v \rm{.}
\end{align*}
\end{define}

\begin{proposition}  \label{prop:adjointinjective}
Suppose that 
\begin{displaymath}
  \psi_n : \left( G_e^{n} \circ F_e^{n} \right) \mathcal{V}(n) \rightarrow \mathcal{V}(n)
\end{displaymath}
is injective, for all $n \in \mathbb{N}$.

Then the module $\tensor[_\sba]{\mathcal{V}(n)}{}$ is tilting for all $n \in \mathbb{N}$.
\end{proposition}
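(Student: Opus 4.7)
The plan is to prove by induction on $n$ that $\mathcal{V}(n) \in \mathcal{F}_n(\Delta) \cap \mathcal{F}_n(\nabla)$, hence is tilting. The base case $n = 1$ is checked directly from Example \ref{example:V1}. For the inductive step, injectivity of $\psi_n$ furnishes a short exact sequence
\begin{equation*}
0 \longrightarrow (G_e \circ F_e)\mathcal{V}(n) \longrightarrow \mathcal{V}(n) \longrightarrow C_n \longrightarrow 0,
\end{equation*}
where $C_n := \coker(\psi_n)$. Since both $\mathcal{F}_n(\Delta)$ and $\mathcal{F}_n(\nabla)$ are extension-closed, it suffices to prove that both outer terms lie in $\mathcal{F}_n(\Delta) \cap \mathcal{F}_n(\nabla)$.

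For $C_n$: applying the exact functor $F_e$ to the sequence and using $F_e \circ G_e \cong \iden$ forces the first arrow to become an isomorphism, so $F_e C_n = 0$. By Proposition \ref{prop:Fexact}, the composition factors of $C_n$ are confined to $\{L_n(-n), L_n(-(n-1))\}$. Proposition \ref{prop:sbafacts} gives $\algdim \Delta_n(-n) = \algdim \Delta_n(-(n-1)) = \binom{n}{0} = 1$, so these two standards coincide with the corresponding simples, and (via the duality induced by the anti-isomorphism $\sba \cong \sba^{\op}$) so do the corresponding costandards. Hence any composition series of $C_n$ is at once a $\Delta$- and a $\nabla$-filtration, placing $C_n$ in $\mathcal{F}_n(\Delta) \cap \mathcal{F}_n(\nabla)$.

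For the kernel, $(G_e \circ F_e)\mathcal{V}(n) \cong G_e \mathcal{V}(n-1)$ by Proposition \ref{prop:Viso}; the inductive hypothesis together with Proposition \ref{prop:ryomhansenmartin} immediately gives $G_e \mathcal{V}(n-1) \in \mathcal{F}_n(\Delta)$, so $\mathcal{V}(n) \in \mathcal{F}_n(\Delta)$ by extension-closure. The principal obstacle is the $\nabla$-filtration, since (as noted after Proposition \ref{prop:Fexact}) $G_e$ does not preserve $\mathcal{F}(\nabla)$ in general, so no direct dualisation of Proposition \ref{prop:ryomhansenmartin} is available. The cleanest way around this is via the contravariant duality $D$ on $\sba-\amod$ induced by $\sba \cong \sba^{\op}$: $D$ fixes simple modules and interchanges $\mathcal{F}_n(\Delta)$ with $\mathcal{F}_n(\nabla)$, so it suffices to check that $\mathcal{V}(n)$ is self-dual under $D$, for instance by producing a non-degenerate bilinear form on $V_2^{\otimes 4n}$ under which the representation $\mathcal{R}$ of Theorem \ref{thm:rep} is self-adjoint up to the anti-automorphism of $\sba$. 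Once this is done, $\mathcal{V}(n) \in \mathcal{F}_n(\Delta)$ immediately implies $\mathcal{V}(n) \in \mathcal{F}_n(\nabla)$, completing the induction and reducing the whole matter to an explicit bilinear-form calculation with the $R$-matrices of Definition \ref{def:Rmatrices}.
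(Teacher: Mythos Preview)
Your proposal is correct and follows essentially the same route as the paper: induction via the short exact sequence furnished by injectivity of $\psi_n$, with $F_e$-annihilation forcing $\coker\psi_n$ to have only the simple composition factors $L_n(-n)$, $L_n(-(n-1))$ (which equal their standards), Proposition~\ref{prop:ryomhansenmartin} placing $G_e\mathcal{V}(n-1)$ in $\mathcal{F}_n(\Delta)$, and contravariant self-duality of $\mathcal{V}(n)$ yielding the $\nabla$-filtration. The paper starts the induction at $n=0$ (where $\sba[0]\cong\Field$ is semisimple) and dispatches the self-duality you leave as a bilinear-form computation by simply observing that each matrix $R^q_i$ is self-adjoint in the standard basis; your detour through showing $C_n\in\mathcal{F}_n(\nabla)$ directly is correct but unnecessary once self-duality is invoked.
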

\begin{proof}
Compare with Proposition 4 of \cite{martinryomhansen2004}.

Note that $\mathcal{V}(0)$ is (trivially) a tilting $\sba[0]$-module, since $\sba[0] \cong \Field$ is semisimple.

Fix $n \in \mathbb{N}$.  Suppose that $\psi_n$ is injective.  Then there is a short exact sequence
\begin{align}
\xymatrix{
0 \ar[r] & \left( G_e^{n} \circ F_e^{n} \right) \mathcal{V}(n) \ar[r]^-{\psi_n} & \mathcal{V}(n) \ar[r] & \coker \psi_n \ar[r] & 0
} \rm{.}  \label{eq:sesforpsi}
\end{align}

Since $F_e \circ G_e \cong \iden_{\sba[n-1]}$ and $F_e$ is exact by Proposition \ref{prop:Fexact}, applying $F_e$ to the above exact sequence gives another exact sequence:
\begin{displaymath}
\xymatrix{
0 \ar[r] & F_e^{n} \mathcal{V}(n) \ar[r] & F_e^{n} \mathcal{V}(n) \ar[r] & F_e^{n} \left( \coker \psi_n \right) \ar[r] & 0
} \rm{.}
\end{displaymath}

So $F_e^{n} \left( \coker \psi_n \right) = 0$ and hence the only composition factors of $\coker \psi_n$ are $L_n(-n)$ and $L_n(n-1)$.  But $L_n(-n) \cong \Delta_n(-n)$ and $L_n(n-1) \cong \Delta_n(n-1)$, so in particular $\coker \psi_n \in \mathcal{F}_{n}(\Delta)$.

Now argue by induction on $n$.  Suppose that $\mathcal{V}(n-1)$ is tilting.

By Proposition \ref{prop:Viso}, $F_e \mathcal{V}(n) \cong \mathcal{V}(n-1)$.  By Proposition \ref{prop:ryomhansenmartin}, it follows that $\left( G_e^{n} \circ F_e^{n} \right) \mathcal{V}(n) \cong G_e^{n} \mathcal{V}(n-1) \in \mathcal{F}_n(\Delta)$.

Therefore the short exact sequence \eqref{eq:sesforpsi} becomes one in which all extremal terms are elements of $\mathcal{F}_n(\Delta)$.  By definition of $\mathcal{F}_n(\Delta)$ then, $\mathcal{V}(n)$ is also a standard module.

Moreover, the isomorphism $\sba \cong (\sba)^{\op}$ is fixed by $\mathcal{V}(n)$, since each matrix $R^q_i$ is self-adjoint.  So $\mathcal{V}(n)$ is contravariant self-dual.  The costandard modules $\{ \nabla_n(\lambda)\}$ are simply the duals of the standard modules $\{ \Delta_n(\lambda)^{\op} \}$ of $(\sba)^{\op}$.  So as $\mathcal{V}(n) \in \mathcal{F}_n(\Delta)$, it must also be the case that $\mathcal{V}(n) \in \mathcal{F}_n(\nabla)$.  Hence $\mathcal{V}(n)$ is a tilting module, as required.
\end{proof}

\section{The Module $\tensor[_\sba]{\mathcal{V}(n)}{}$ is Full Tilting}

Proposition \ref{prop:adjointinjective} reduces the problem of showing that the module $\themodule$ is tilting to the problem of proving that a certain linear map between two finite dimensional vector spaces is injective.  The aim is to show this for all $n \in \mathbb{N}$.  This can be done by proving some new combinatorial results.

First, consider the following example, which shows that $\psi_1 : \left( G_e^{1} \circ F_e^{1} \right) \mathcal{V}(1) \rightarrow \mathcal{V}(1)$ is injective.  It follows that $\mathcal{V}(1)$ is a tilting module.  This example also serves as a base case for the proof by induction of Theorem \ref{thm:Vistilting}.

\begin{example}  \label{example:Vagain}
Let $n=1$.  Recall Example \ref{example:V1}.  In this example it was shown that $F_e^{1} \mathcal{V}(1)$ is spanned by a single element:
\begin{displaymath}
\uline{\widetilde{12}^{x}\widetilde{12}^{y}} = \left( x \uline{12} + \uline{21} \right) \otimes \left( y \uline{12} + \uline{21} \right) \rm{.}
\end{displaymath}

By Lemma \ref{lemma:sbaeasrightmodule} $\sba[1] e$ is generated by the single element $e$.

Therefore
\begin{align}
\left( G_e^{1} \circ F_e^{1} \right) \mathcal{V}(1) &= \sba[1] e \otimes_{\Field} \bar{e} \mathcal{V}(1)
\end{align}
is spanned by the element $e \otimes \uline{\widetilde{12}^{x}\widetilde{12}^{y}}$.

By the same Proposition, $\psi_1 (e \otimes \uline{\widetilde{12}^{x}\widetilde{12}^{y}}) = \uline{\widetilde{12}^{x}\widetilde{12}^{y}}$.  Since this is non-zero, it follows that $\psi_1$ is an injective map.

Moreover, the (one dimensional) module $F_e^{1} \mathcal{V}(1) \cong \Delta_0(0)$.

It follows by Proposition \ref{prop:ryomhansenmartin} that
\begin{align}
\left( (G_e^{1} \circ F_e^{1}) \mathcal{V}(1) \; : \; \Delta_{1}(\lambda) \right) &= \left\{ \begin{array}{cl}
       1 & \mbox{if } \lambda = 0 \\
       0 & \mbox{otherwise} 
       \end{array} \right. \rm{.}
\end{align}

Finally, since $\algdim_{\Field} \mathcal{V}(1) = 16$ and $\algdim_{\Field} \Delta_1(0) = 2$ it follows that $$\left( \mathcal{V}(1) \; : \; \Delta_{1}(-1)\right) = 14 \rm{.}$$
\end{example}

The following integer sequence is needed for the statement of Theorem \ref{thm:Vistilting}.

\begin{define}  \label{def:vsequence}
Define an integer sequence $\{v(n)\}_{n \in \mathbb{N}}$ by
\begin{align}
v(0) &= 1 \\
v(1) &= 14 \\
v(2) &= 224 \\
v(m+1) + v(m-1) &= 16 v(m)  \qquad \mbox{for } m \geq 2 \rm{.}
\end{align}
\end{define}

It is now possible to state
\begin{theorem}  \label{thm:Vistilting}
For all $n \in \mathbb{N}$:
\begin{itemize}
 \item[(i)] The map $\psi_n : G_e^{n} \circ F_e^{n} \left( \mathcal{V}(n) \right) \rightarrow \mathcal{V}(n)$ is injective, and so $\mathcal{V}(n)$ is tilting.
 \item[(ii)] For all $\lambda \in \Lambda_n$ we have $\left[ \mathcal{V}(n) : \Delta(\lambda) \right] = v(|\lambda|)$.
 \item[(iii)] If $d_n$ is the dimension of $G_e^{n} \circ F_e^{n} \left( \mathcal{V}(n) \right)$, then $d_n = 16^{n} - v(n-1) - v(n)$.
 \item[(iv)] The image $\psi_n \left(G_e^{n} \circ F_e^{n} \left( \mathcal{V}(n) \right) \right)$ has basis $\mathcal{E}_n := \mathcal{E}_n^{A} \cup \mathcal{E}_n^{B} \cup \mathcal{E}_n^{C}$, defined by
\begin{align*}
 \mathcal{E}_1^{A} &:= \left\{ \uline{\widetilde{12}^{x}\widetilde{{12}}^{y}} \right\} \rm{,} \quad \\
 \mathcal{E}_1^{B} &:= \emptyset \rm{,} \quad \\
 \mathcal{E}_1^{C} &:= \left\{ \mathfrak{v}_1 \right\}
\end{align*}
and, for $n \geq 2$, by
\begin{align*}
 \mathcal{E}_n^{A} &:= \left\{ \uline{s_1 v_L s_2 s_3 v_r s_4} \; | \; s_k \in \{ 1,2 \} \mbox{ and } v_L v_R \in \mathcal{E}_{n-1}^{A} \cup \mathcal{E}_{n-1}^{B}  \right\} \\
 \mathcal{E}_n^{B} &:= \left\{ \uline{\widetilde{12}^{a} v_L \widetilde{12}^{b} \widetilde{12}^{c} v_R \widetilde{12}^{d}} \; | \; v_L, v_R \in \{ 1,2 \}^{2n-4} \mbox{ and } v_L v_R \notin u(\mathcal{E}_{n-2}^{A} \cup \mathcal{E}_{n-2}^{B}) \right\} \\
 \mathcal{E}_n^{C} &:= \left\{ \uline{22]^{(ad,w)} v_{L} [1122]^{(bc,z)} v_{R} [11} \; | \; v_L, v_R \in \{ 1,2 \}^{2n-4} \mbox{ and } v_L v_R \notin u(\mathcal{E}_{n-2}^{A} \cup \mathcal{E}_{n-2}^{B}) \right\}
\end{align*}
\end{itemize}
where $u : \mathcal{V}(n) \rightarrow \{1,2\}^{n}$ is the map given in Definition \ref{def:umapsfromV} and $\{v(m)\}_{m \in  \mathbb{N}}$ is the integer sequence given in Definition \ref{def:vsequence} above.
\end{theorem}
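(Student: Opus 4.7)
The plan is to induct on $n$ and prove parts (i)--(iv) simultaneously. The base case $n=1$ is handled by Example \ref{example:Vagain}; the case $n=0$ is trivial since $\sba[0] \cong \Field$. Assume the theorem holds for all positive integers strictly less than $n$. The starting point is Proposition \ref{prop:imageunderpsi}, which gives three explicit spanning families for $\psi_n((G_e^n \circ F_e^n)\mathcal{V}(n))$, arising from $g_0 v = ev$, $g_k v = U_k(g_{k-1}v)$ for $1 \leq k < n$, and $g_n v = f(g_{n-1}v)$. The aim is to carve out of these spanning sets a linearly independent subset matching $\mathcal{E}_n^A \cup \mathcal{E}_n^B \cup \mathcal{E}_n^C$.

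First I would reduce the $A$-family. Via the isomorphism $F_e^n \mathcal{V}(n) \cong \mathcal{V}(n-1;\uline{\Sigma_L})$ of Proposition \ref{prop:Viso}, every vector $ev$ arises from a vector in $\mathcal{V}(n-1)$ with two $\widetilde{12}$-factors inserted at positions $(-n,-n+1)$ and $(n,n+1)$. Applying the inductive hypothesis (iv) at level $n-1$, the image $\psi_{n-1}((G_e^{n-1} \circ F_e^{n-1})\mathcal{V}(n-1))$ has basis $\mathcal{E}_{n-1}^A \cup \mathcal{E}_{n-1}^B \cup \mathcal{E}_{n-1}^C$; the $C$-type contributions at level $n-1$ will turn out to produce vectors already captured by the $g_n$-family at level $n$ (i.e.\ by $\mathcal{E}_n^C$), making them redundant for this reduction. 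This explains why the definition of $\mathcal{E}_n^A$ only involves $\mathcal{E}_{n-1}^A \cup \mathcal{E}_{n-1}^B$.

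Next I would reduce the $B$- and $C$-families. The intermediate vectors $g_k v$ for $1 \leq k < n$ share the same outer envelope of four $\widetilde{12}$-factors and differ only in their middle blocks $v_L v_R$; a combinatorial analysis of the $U_j$-action shows that whenever $v_L v_R$ equals $u(w)$ for some $w \in \mathcal{E}_{n-2}^A \cup \mathcal{E}_{n-2}^B$, the vector $g_k v$ can be rewritten, using the inductive description of the image at level $n-2$, as a linear combination of other $\mathcal{E}_n^A$ and $\mathcal{E}_n^B$ elements with lexicographically earlier leading terms under $u$. An identical argument handles $\mathcal{E}_n^C$ from the $g_n v$ family. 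Linear independence of $\mathcal{E}_n$ is then immediate from the leading-term map $u$ of Definition \ref{def:umapsfromV}, since each proposed basis element has, by construction, a distinct leading standard basis vector.

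The remaining parts follow by dimension-chasing. Part (iii) is a direct count $|\mathcal{E}_n^A| + |\mathcal{E}_n^B| + |\mathcal{E}_n^C|$, which simplifies via the recursion $v(m+1)+v(m-1)=16 v(m)$ to $16^n - v(n-1) - v(n)$. Combining the induction hypothesis (ii) for $\mathcal{V}(n-1)$, Proposition \ref{prop:ryomhansenmartin}, and the exact sequence constructed in the proof of Proposition \ref{prop:adjointinjective} (whose cokernel is filtered by the two standard modules annihilated by $F_e^n$, each of dimension one), one recovers part (ii). Finally part (i) follows because the domain and image of $\psi_n$ now have equal finite dimension, forcing injectivity of the surjection onto its image. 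The hard part will be the span-reduction step: verifying that the exclusion $v_L v_R \notin u(\mathcal{E}_{n-2}^A \cup \mathcal{E}_{n-2}^B)$ captures precisely the redundancies among the three spanning families of Proposition \ref{prop:imageunderpsi} and no others. This requires careful bookkeeping of how $[1122]$-type vectors interact with nested $\widetilde{12}$-type vectors at overlapping positions, very much in the spirit of Example \ref{example:squarebracketnotation}.
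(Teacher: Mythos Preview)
Your inductive scaffolding is correct and matches the paper's: base case via Example~\ref{example:Vagain}, induction hypothesis feeding Proposition~\ref{prop:ryomhansenmartin} to get the $\Delta$-filtration of the domain, linear independence of $\mathcal{E}_n$ read off from the leading-term map $u$. However, the step you flag as ``the hard part'' --- the explicit span-reduction showing that vectors with $v_Lv_R \in u(\mathcal{E}_{n-2}^A \cup \mathcal{E}_{n-2}^B)$ are redundant --- is entirely avoided in the paper, and you should avoid it too. The paper never proves directly that $\mathcal{E}_n$ spans the image. Instead it argues as follows: $\mathcal{E}_n$ is a linearly independent subset of the image (containment is immediate from Proposition~\ref{prop:imageunderpsi}); hence $|\mathcal{E}_n| \leq \dim(\mathrm{image}) \leq d_n$. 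Separately, $d_n$ is computed from the $\Delta$-filtration of the domain using Proposition~\ref{prop:sbafacts} and Lemma~\ref{lem:Drec}, and $|\mathcal{E}_n|$ is computed from the recursive definition via the auxiliary sequences $A_n, B_n$ of Definition~\ref{def:AandB} and Lemma~\ref{lem:ABresults}. Both come out to $16^n - v(n-1) - v(n)$, so all inequalities are equalities: $\psi_n$ is injective and $\mathcal{E}_n$ is a basis for the image. Your span-reduction would give the same conclusion but at the cost of delicate combinatorial rewriting that you yourself identify as the crux; replacing it with this two-sided dimension squeeze is the main simplification.

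One further gap: your argument for part~(ii) using only the $F_e$-exact sequence does not pin down the multiplicity $[\mathcal{V}(n):\Delta_n(-(n-1))]$, since the cokernel is filtered by copies of $\Delta_n(-n)$ and $\Delta_n(-(n-1))$ (the two standards killed by $F_e^n$) with a priori unknown multiplicities. The paper resolves this by also invoking $F_f^n \mathcal{V}(n) \cong \mathcal{V}(n-1)$ from Proposition~\ref{prop:Viso}: since $F_f^n$ sends $\Delta_n(-(n-1))$ to $\Delta_{n-1}(-(n-1))$, the induction hypothesis at level $n-1$ gives $[\mathcal{V}(n):\Delta_n(-(n-1))] = v(n-1)$. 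The last remaining multiplicity $[\mathcal{V}(n):\Delta_n(-n)]$ is then fixed by the total dimension $16^n$ and Lemma~\ref{lem:Drec}.
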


The proof depends on some combinatorial results.  First, define $\mathcal{D}(n,r)$ by:

\begin{define}  \label{def:Dnr}
Let $n,r \in \mathbb{N}$, with $0 \leq r \leq n$.  Let $k(n,r)$ be the function defined in Proposition \ref{prop:sbafacts}.

Define $\mathcal{D}(n,r) \in \mathbb{N}$ by
\begin{align}
\mathcal{D}(n,0) &= 2^{n} \label{eq:Dn0} \\  
\mathcal{D}(0,r) &= 0 \qquad \mbox{if } r \neq 0  \rm{,}  \label{eq:D0r}
\end{align}
and otherwise
\begin{align}
\mathcal{D}(n,r) &= \left(\sum_{i=0}^{k(n,r)} \begin{pmatrix} n \\ i \end{pmatrix} \right)
  + \left(\sum_{i=0}^{k(n,-r)} \begin{pmatrix} n \\ i \end{pmatrix} \right) \rm{.}
\end{align}
\end{define}

\begin{lemma}  \label{lem:Drec}
Let $n, r \in \mathbb{N}$.  Let $\mathcal{D}(n,r)$ be as in Definition \ref{def:Dnr}.  Then if $r > 0$
\begin{align}
\mathcal{D}(n+1,r) = \mathcal{D}(n,r-1) + \mathcal{D}(n,r+1) \rm{.}
\end{align}
Together with \eqref{eq:Dn0} and \eqref{eq:D0r} this recurrence relation determines $\mathcal{D}(n,r)$ exactly.

Now let $\{v(m)\}_{m \in \mathbb{N}}$ be as in Definition \ref{def:vsequence}.  Then
\begin{align}
\sum_{r=0}^{n} v(r) \mathcal{D}(n,r) &= 16^n \rm{,} 
\end{align}
and this equation also determines $\{v(m)\}_{m \in \mathbb{N}}$ exactly.
\end{lemma}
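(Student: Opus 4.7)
The plan is to address the four assertions packaged in the lemma in turn: the recurrence $\mathcal{D}(n+1,r)=\mathcal{D}(n,r-1)+\mathcal{D}(n,r+1)$ for $r>0$, the fact that it together with \eqref{eq:Dn0} and \eqref{eq:D0r} determines $\mathcal{D}$ uniquely, the identity $\sum_{r=0}^{n} v(r)\mathcal{D}(n,r)=16^{n}$, and that this identity pins down $\{v(m)\}$ exactly.

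For the recurrence I would proceed by elementary case analysis on the parities of $n\pm r$, with the boundary $r=1$ singled out because $\mathcal{D}(n,0)=2^{n}$ is not of the usual binomial-sum form. In each case the piecewise formula for $k(n,\lambda)$ from Proposition \ref{prop:sbafacts} makes the identity explicit, and it reduces, via Pascal's rule $\binom{n+1}{i}=\binom{n}{i}+\binom{n}{i-1}$, to an equality of partial binomial sums; at $r=1$ the symmetry $\binom{n}{i}=\binom{n}{n-i}$ is needed in addition, to produce the extra factor of $2^{n}$. This case split is the step I expect to be the main obstacle: elementary but tedious, with the $r=1$ boundary the delicate point. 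Uniqueness of $\mathcal{D}$ is then immediate by induction on $n$: \eqref{eq:D0r} fixes the whole $n=0$ row, \eqref{eq:Dn0} supplies the left boundary at each subsequent stage, and the recurrence propagates row $n$ to row $n+1$.

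For the identity I set $S_{n}:=\sum_{r\geq 0} v(r)\mathcal{D}(n,r)$. The base case $S_{0}=v(0)\mathcal{D}(0,0)=1=16^{0}$ is immediate; for the inductive step, substituting the recurrence for each $\mathcal{D}(n+1,r)$ with $r\geq 1$ and reindexing gives
\begin{align*}
S_{n+1} &= v(0)\cdot 2^{n+1} + \sum_{s\geq 0} v(s+1)\mathcal{D}(n,s) + \sum_{s\geq 2} v(s-1)\mathcal{D}(n,s).
\end{align*}
Separating the $s\in\{0,1\}$ contributions of the first sum and applying $v(s+1)+v(s-1)=16\,v(s)$ for $s\geq 2$, the combined tail collapses to $16\bigl(S_{n}-v(0)\mathcal{D}(n,0)-v(1)\mathcal{D}(n,1)\bigr)$. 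The leftover boundary terms have coefficients $2v(0)+v(1)-16v(0)=2+14-16=0$ on $\mathcal{D}(n,0)=2^{n}$ and $v(2)-16v(1)=224-224=0$ on $\mathcal{D}(n,1)$, so $S_{n+1}=16\,S_{n}=16^{n+1}$ by the inductive hypothesis.

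Finally, that this identity determines $\{v(m)\}$ uniquely reduces to showing $\mathcal{D}(n,n)=1$ for every $n\geq 0$; this is a one-line induction using the recurrence together with $\mathcal{D}(n,r)=0$ for $r>n$, which is visible directly from the closed formula. Granted $\mathcal{D}(n,n)=1$, the equation $\sum_{r=0}^{n} v(r)\mathcal{D}(n,r)=16^{n}$ is lower-triangular in $\{v(r)\}_{r\leq n}$ and solves inductively for $v(n)$ in terms of $v(0),\ldots,v(n-1)$.
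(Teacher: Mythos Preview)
Your proposal is correct and supplies exactly the ``straightforward calculations'' the paper gestures at but does not write out: the parity case analysis (with the $r=1$ boundary needing the symmetry $\binom{n}{i}=\binom{n}{n-i}$) for the recurrence, and the telescoping induction using $v(s+1)+v(s-1)=16\,v(s)$ together with the numerical coincidences $2v(0)+v(1)=16v(0)$ and $v(2)=16v(1)$ for the identity, are precisely the checks one has to make. One minor point worth flagging: the definition in the paper restricts $\mathcal{D}(n,r)$ to $0\leq r\leq n$, so your invocation of $\mathcal{D}(n,r)=0$ for $r>n$ (used to run the recurrence at the top edge and to get $\mathcal{D}(n,n)=1$ inductively) is really an extension by convention---you do verify it is consistent with the closed formula, which is all that is needed.
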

\begin{proof}
This follows immediately from some straightforward calculations.
\end{proof}

It is also useful to define sequences $\{A_n\}$ and $\{B_n\}$ as follows:

\begin{define}  \label{def:AandB}
 Define integer sequences $\{ A_n \}_{n \geq 1}$, $\{ B_n \}_{n \geq 1}$ by 
\begin{displaymath}
 A_1 = 1 \rm{,} \quad B_1 = 0 \rm{,} \quad A_2 = 16 \rm{,} \quad B_2 = 1
\end{displaymath}
and, for $n > 2$,
\begin{align}
 A_n &= 16(A_{n-1} + B_{n-1}) \\
 B_n &= 16^{n-2} - A_{n-2} - B_{n-2} \rm{,}
\end{align}
\end{define}

The next result relates these sequences to $\{v(m)\}_{m \in \mathbb{N}}$.

\begin{lemma}  \label{lem:ABresults}
 For all $n \geq 2$ we have
\begin{align}
 B_{n} &= \sum_{r=0}^{n-2} v(r) \label{eq:propAB2} \\
 A_{n} &= 16^{n} - \sum_{r=0}^{n-2} v(r) - \sum_{r=0}^{n} v(r) \label{eq:propAB3} \rm{,}
\end{align}
and hence
\begin{align}
 A_n + 2 B_n &= 16^{n} - v(n-1) - v(n) \label{eq:propAB1}
\end{align}
\end{lemma}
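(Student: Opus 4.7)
The three claimed identities are not independent: adding \eqref{eq:propAB2} and \eqref{eq:propAB3} yields the clean relation $A_n + B_n = 16^n - \sum_{r=0}^{n} v(r)$, and \eqref{eq:propAB1} is then a one-line consequence, since
\begin{displaymath}
A_n + 2 B_n = (A_n + B_n) + B_n = 16^n - \sum_{r=0}^{n} v(r) + \sum_{r=0}^{n-2} v(r) = 16^n - v(n-1) - v(n).
\end{displaymath}
So the plan is to establish \eqref{eq:propAB2} and \eqref{eq:propAB3} simultaneously by induction on $n$, whereupon \eqref{eq:propAB1} is immediate. Write $S_k := \sum_{r=0}^{k} v(r)$, with the empty-sum convention $S_{-1} = 0$; then the two formulas actually hold at $n=1$ as well, directly from $A_1 = 1$, $B_1 = 0$, which simplifies the bookkeeping in the induction.

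The base case $n=2$ is a direct check against the initial data $A_2 = 16$, $B_2 = 1$, $v(0)=1$, $v(1)=14$, $v(2)=224$. For the inductive step, fix $n \geq 3$ and assume the formulas at all smaller indices. Feeding the inductive hypothesis into the recurrence $B_n = 16^{n-2} - A_{n-2} - B_{n-2}$ from Definition \ref{def:AandB}, the telescoping sum gives $A_{n-2} + B_{n-2} = 16^{n-2} - S_{n-2}$, and hence $B_n = S_{n-2}$ at once, verifying \eqref{eq:propAB2}. For \eqref{eq:propAB3}, substituting similarly into $A_n = 16(A_{n-1} + B_{n-1})$ yields $A_n = 16^n - 16\, S_{n-1}$, and the target formula becomes equivalent to the identity $16\, S_{n-1} = S_{n-2} + S_n$. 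After expanding $S_n = S_{n-2} + v(n-1) + v(n)$ and $S_{n-1} = S_{n-2} + v(n-1)$, this reduces to the single auxiliary identity
\begin{equation} \label{eq:auxsum}
14 \, S_{n-1} = v(n) - v(n-1).
\end{equation}

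The crux of the proof is therefore \eqref{eq:auxsum}. This follows from an elementary secondary induction powered by the three-term recurrence $v(m+1) = 16\, v(m) - v(m-1)$ from Definition \ref{def:vsequence}: the base case $n=2$ reads $14 \cdot 15 = 210 = v(2) - v(1)$, and given \eqref{eq:auxsum} at index $n$ the inductive step computes
\begin{displaymath}
14 \, S_n = 14\, S_{n-1} + 14\, v(n) = (v(n) - v(n-1)) + 14\, v(n) = 15\, v(n) - v(n-1) = v(n+1) - v(n).
\end{displaymath}

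The only real obstacle is keeping the shifted indices $S_{n-2}$, $S_{n-1}$, $S_n$ straight; no combinatorial input beyond the $v$-recurrence is needed, and once the auxiliary identity \eqref{eq:auxsum} is isolated the remaining manipulations are purely arithmetic.
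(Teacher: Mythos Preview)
Your proof is correct and complete. The paper's own proof is simply the sentence ``This statement also follows quickly from a direct calculation,'' so you have filled in precisely the computation the paper omits; the approach is the same, and your isolation of the auxiliary identity $14\,S_{n-1} = v(n) - v(n-1)$ (which relies on the $v$-recurrence only at $m \geq 2$, where it is valid) is a clean way to organise the arithmetic.
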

\begin{proof}
This statement also follows quickly from a direct calculation.
\end{proof}

\begin{proof}[Proof of Theorem \ref{thm:Vistilting}]
By induction on $n$.

The base case $n=1$ was the subject of Example \ref{example:Vagain}.

For the inductive step, suppose the theorem is true for $n=N$ and consider $n=N+1$.

By Proposition \ref{prop:Viso} it follows that $F_e^{N+1} \mathcal{V}(N+1) \cong \mathcal{V}(N)$ is tilting.  In particular, it has a standard filtration.  Then by Proposition \ref{prop:ryomhansenmartin} so does $(G_e^{N+1} \circ F_e^{N+1}) \mathcal{V}(N+1)$.  The standard multiplicities are given by

\begin{align}
\left( (G_e^{N+1} \circ F_e^{N+1}) \mathcal{V}(N+1) \; : \; \Delta_{N+1}(\lambda) \right) &= \left\{ \begin{array}{cl}
       (\mathcal{V}(N) \; : \; \Delta_{N}(-\lambda) ) & \mbox{if } -\lambda \in \Lambda_{N} \\
       0 & \mbox{otherwise} 
       \end{array} \right. \\
       &= \left\{ \begin{array}{cl}
       v(|\lambda|) & \mbox{if } -\lambda \in \Lambda_{N} \\
       0 & \mbox{otherwise}
       \end{array} \right.  \rm{.}
\end{align}

It follows that 
\begin{align}
\algdim_{\Field} (G_e \circ F_e) (\mathcal{V}(N+1)) &= \sum_{-\lambda \in \Lambda_{N}} v(|\lambda|) \algdim_{\Field} \Delta_{N+1}(\lambda) \\
 &= \sum_{r=0}^{N-1} v(r) \mathcal{D}(N+1,r) + v(N) \rm{,}
\end{align}
using the definition of $\mathcal{D}(n,r)$ and the expression for $\algdim_{\Field} \Delta_{N+1}(\lambda)$ given in Proposition \ref{prop:sbafacts}.

Now Lemma \ref{lem:Drec} implies (iii) holds for $n=N+1$, since

\begin{align}
d_{N+1} &= \algdim_{\Field} (G_e \circ F_e) (\mathcal{V}(N+1)) \\
  &= \sum_{r=0}^{N-1} v(r) \mathcal{D}(N+1,r) + v(N) \\
  &= \sum_{r=0}^{N+1} v(r) \mathcal{D}(N+1,r) - v(N) - v(N+1) \\
  &= 16^{N+1} - v(N) - v(N+1) \rm{.}
\end{align}

Now consider $\psi_{N+1} : \left(G_e^{N+1} \circ F_e^{N+1} \right) \left( \mathcal{V}(N+1) \right) \rightarrow \mathcal{V}(N+1)$.  

It follows from Proposition \ref{prop:imageunderpsi} that $\mathcal{E}_{N+1} \subset \psi_{N+1} \left( \left(G_e^{N+1} \circ F_e^{N+1} \right) \left( \mathcal{V}(N+1) \right) \right)$.  What remains is to show that this is a spanning set and that the elements are linearly independent.  Consider the second of these claims first.

By the inductive hypothesis, the elements of $\mathcal{E}_{N}$ are linearly independent.  It follows immediately that the elements of $\mathcal{E}_{N+1}^{A}$ are also independent.  The elements of $\mathcal{E}_{N+1}^{B}$ are also linearly independent, as are those of $\mathcal{E}_{N+1}^{C}$.  This follows from a direct inspection and does not require the inductive hypothesis.

Recall the map $u : \mathcal{V}(N+1) \rightarrow \{1,2\}^{N+1}$ from Definition \ref{def:umapsfromV}.

Any sequence in $u (\mathcal{E}_{N+1}^{B})$ must, by construction, have second term equal to $2$.  Any sequence in $u (\mathcal{E}_{N+1}^{C})$ must, by construction, have second term equal to $1$.  So these two sets are distinct.  Therefore so are $\mathcal{E}_{N+1}^{B}$ and $\mathcal{E}_{N+1}^{C}$.

It is also immediate from the construction of the three sets that no term that appears in $u\left( \mathcal{E}_{N+1}^{A} \right)$ can also appear in either $u\left( \mathcal{E}_{N+1}^{B} \right)$ or $u\left( \mathcal{E}_{N+1}^{C} \right)$.  So $\mathcal{E}_{N+1}^{A}$ and $\mathcal{E}_{N+1}^{B} \cup \mathcal{E}_{N+1}^{C}$ are also distinct.

This shows that the elements of $\mathcal{E}_{N+1} := \mathcal{E}_{N+1}^{A} \cup \mathcal{E}_{N+1}^{B} \cup \mathcal{E}_{N+1}^{C}$ are linearly independent.

It remains to show that $\mathcal{E}_{N+1}$ is a spanning set of $\psi_{N+1} \left( \left(G_e^{N+1} \circ F_e^{N+1} \right) \left( \mathcal{V}(N+1) \right) \right)$.  This can be shown by demonstrating that the dimension of the subspace spanned by $\mathcal{E}_{N+1}$ is equal to the dimension of the whole space $\psi_{N+1} \left( \left(G_e^{N+1} \circ F_e^{N+1} \right) \left( \mathcal{V}(N+1) \right) \right)$.  Showing this will also prove that part (iii) of the Theorem holds for $n=N+1$.

Observe first that 
\begin{displaymath}
\algdim_{\Field} \psi_{N+1} \left( (G_e \circ F_e) \mathcal{V}(N+1) \right) \leq \algdim_{\Field} (G_e \circ F_e) \mathcal{V}(N+1) \rm{.}
\end{displaymath}

Observe also that, from the definitions of the sets $\mathcal{E}_{n}^{A}$, $\mathcal{E}_{n}^{B}$ and $\mathcal{E}_{n}^{C}$,
\begin{align}
|\mathcal{E}_{n}^{A}| &= A_n \\
|\mathcal{E}_{n}^{B}| &= B_n \\
|\mathcal{E}_{n}^{C}| &= B_n \rm{,}
\end{align}
where $\{A_n\}$ and $\{B_n\}$ are the sequences defined recursively in Definition \ref{def:AandB}.

Therefore, by Lemma \ref{lem:ABresults} and the linear independence of the elements of $\mathcal{E}_{N+1}$, it follows that
\begin{align}
|\mathcal{E}_{N+1}| &= |\mathcal{E}_{N+1}^{A}| + |\mathcal{E}_{N+1}^{B}| + |\mathcal{E}_{N+1}^{C}| \\
   &= A_{N+1} + 2B_{N+1} \\
   &= 16^{N+1} - v(N) - v(N+1) \\
   &= d_{N+1} \rm{.}
\end{align}

On the other hand, by Lemma \ref{lem:Drec},

\begin{align*}
\algdim_{\Field} (G_e \circ F_e) (\mathcal{V}(N+1)) &= \sum_{\lambda \in \Lambda_{N+1} \setminus \{-N,N-1\}} v(|\lambda|) \algdim_{\Field} \Delta_{N+1}(\lambda) \\
&= \algdim_{\Field} \Delta_{N+1}(0) v(0) - v(N) \algdim_{\Field} \Delta_{N+1}(N) - v(N+1) \algdim_{\Field} \Delta_{N+1}(-(N+1))\\
&\qquad + \sum_{r=1}^{N+1} \left( \algdim_{\Field} \Delta_{N+1}(r) + \algdim_{\Field} \Delta_{N+1}(-r)  \right) v(r)  \\
&= \sum_{r=0}^{N+1} \mathcal{D}(N+1,r) v(r) - v(N) - v(N+1) \\
&= 16^{N+1} - v(N) - v(N+1) \\
&= d_{N+1} \rm{.}
\end{align*}

Thus we have shown both (iii) and (iv).

Finally, for part (ii), recall the localisation functor $F_f$.  By Proposition \ref{prop:Viso}, $F_e \mathcal{V}(N+1) \cong F_f \mathcal{V}(N+1) \cong \mathcal{V}(N)$. The standard multiplicity
\begin{displaymath}
\left[ \mathcal{V}(N+1) : \Delta_{N+1}(N) \right] = \left[ \mathcal{V}(N+1) : \Delta_{N+1}(-N) \right] = v(N) \rm{.}
\end{displaymath}
To find $\left[ \mathcal{V}(N+1) : \Delta_{N+1}(-(N+1)) \right]$, note that
\begin{align*}
\algdim_{\Field} \mathcal{V}(N+1) &= 16^{N+1} \\
 &= \sum_{r=0}^{N} v(r) \mathcal{D}(N+1,r) + \left[ \mathcal{V}(N+1) : \Delta(N-1) \right] \rm{.}
\end{align*}
Therefore $\left[ \mathcal{V}(N+1) : \Delta(N-1) \right] = v(N+1)$ by Lemma \ref{lem:Drec}.  This proves (ii) for $n=N+1$ and completes the proof of the theorem.
\end{proof}

The following consequence of Theorem \ref{thm:Vistilting} is immediate.
\begin{corollary}
Let $n \in \mathbb{N}$. Let $\uline{\Pi} = (\Parameters) \in (\Field^{\times})^{6}$.

Then each of the permutation modules $M_n(r)$ given in Definition \ref{def:permutationmodule} is a tilting module for $\sba(\uline{\Pi})$.
\end{corollary}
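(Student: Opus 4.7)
The plan is to derive this corollary almost immediately from Theorem \ref{thm:Vistilting} together with Proposition \ref{prop:permutationmodules}, by appealing to the standard fact that the subcategories $\mathcal{F}_n(\Delta)$ and $\mathcal{F}_n(\nabla)$ of $\sba\text{-}\amod$ are closed under taking direct summands.

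First I would invoke Theorem \ref{thm:Vistilting} to conclude that $\themodule$ is tilting, and hence lies in $\mathcal{F}_n(\Delta) \cap \mathcal{F}_n(\nabla)$. Proposition \ref{prop:permutationmodules} then supplies the $\sba(\uline{\Pi})$-module decomposition
\begin{displaymath}
\themodule \;=\; \bigoplus_{-2n \leq r \leq 2n} M_n(r) \rm{,}
\end{displaymath}
which exhibits each $M_n(r)$ as a direct summand of a tilting module. Applying the general fact that for a quasihereditary algebra both $\mathcal{F}(\Delta)$ and $\mathcal{F}(\nabla)$ are closed under direct summands (see, for instance, \cite[Appendix]{donkin}), one obtains $M_n(r) \in \mathcal{F}_n(\Delta) \cap \mathcal{F}_n(\nabla)$ for every admissible $r$, so each $M_n(r)$ is itself a tilting $\sba(\uline{\Pi})$-module, as claimed.

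The only non-routine ingredient is the closure of $\mathcal{F}(\Delta)$ and $\mathcal{F}(\nabla)$ under summands; this is a well-known consequence of the $\mathrm{Ext}^{1}$-vanishing characterisations of these subcategories, so I anticipate no real obstacle to the argument. The statement is genuinely a short corollary: all of the technical work has already been done in the proof of Theorem \ref{thm:Vistilting}.
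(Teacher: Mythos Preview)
Your proposal is correct and matches the paper's own proof essentially line for line: the paper also invokes Theorem \ref{thm:Vistilting} to get that $\themodule$ is tilting, Proposition \ref{prop:permutationmodules} to exhibit each $M_n(r)$ as a direct summand, and then the standard fact that direct summands of tilting modules are tilting. The only cosmetic difference is that the paper states this last fact directly rather than unpacking it as closure of $\mathcal{F}_n(\Delta)$ and $\mathcal{F}_n(\nabla)$ under summands.
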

\begin{proof}
Every summand of a tilting module is itself tilting.  By Theorem \ref{thm:Vistilting} $\themodule$ is tilting.  By Proposition \ref{prop:permutationmodules} $\themodule$ is a direct sum of permutation modules, and every permutation module $M_n(r)$ appears as a direct summand.  This proves the claimed result.
\end{proof}

\begin{lemma}  \label{lem:FonMnr}
\begin{displaymath}
F_e^{n} M_n(r) \cong \left\{ \begin{array}{cl}
                     M_{n-1}(r) & \mbox{if } r \neq n \\
                     0 & \mbox{if } r = n \end{array}  \right. \rm{.}
\end{displaymath}
\begin{displaymath}
F_f^{n} M_n(r) \cong \left\{ \begin{array}{cl}
                     M_{n-1}(r) & \mbox{if } r \neq n \\
                     0 & \mbox{if } r = n \end{array}  \right. \rm{.}
\end{displaymath}
\end{lemma}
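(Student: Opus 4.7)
The plan is to derive both claims by restricting the explicit isomorphisms $\theta : \mathcal{V}(n-1;\uline{\Sigma_L}) \xrightarrow{\sim} F_e^{n} \mathcal{V}(n;\uline{\Sigma})$ and its $f$-counterpart $\mathcal{V}(n-1;\uline{\Sigma_R}) \xrightarrow{\sim} F_f^{n} \mathcal{V}(n;\uline{\Sigma})$ constructed in the proof of Proposition \ref{prop:Viso}, and checking that they respect the grading by number of $\uline{1}$s.

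First I observe that $F_e^{n} = \bar{e}(-)$ and $F_f^{n} = \bar{f}(-)$ are realized by left multiplication by the idempotents $\bar{e} = \tfrac{1}{\delta_L}e$ and $\bar{f} = \tfrac{1}{\delta_R}f$, and that $e = R^x_{-n}R^y_{n}$, $f = R^z_{0}R^w_{2n}$. Inspecting Definition \ref{def:Rmatrices} shows that every operator $R^q_i$ sends a standard basis vector to a linear combination of standard basis vectors with exactly the same multiset of $\uline{1}$s and $\uline{2}$s. Hence $\bar{e}$ and $\bar{f}$ both preserve the permutation module decomposition of Proposition \ref{prop:permutationmodules}, so that $F_e^{n} M_n(r) = \bar{e}M_n(r) \subseteq M_n(r)$, $F_f^{n}M_n(r) \subseteq M_n(r)$, and $F_e^{n}\mathcal{V}(n) = \bigoplus_r F_e^{n} M_n(r)$ (similarly for $F_f^{n}$).

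Next, the isomorphism $\theta$ is defined on standard basis vectors by inserting two factors $\widetilde{12}^x$ and $\widetilde{12}^y$ into $\uline{\alpha}$. Because each such factor $q\uline{12} + \uline{21}$ is a linear combination of basis vectors with exactly one $\uline{1}$ and one $\uline{2}$, inserting both increases the counts of $\uline{1}$s and $\uline{2}$s by exactly two each. So if $\uline{\alpha} \in M_{n-1}(r)$ (having $2(n-1)+r$ $\uline{1}$s), every standard basis term in the expansion of $\theta(\uline{\alpha})$ has $2n+r$ $\uline{1}$s, and hence lies in $M_n(r)$. Thus $\theta(M_{n-1}(r)) \subseteq F_e^{n} M_n(r)$. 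Since $\theta$ is a bijective $\Field$-linear map whose source and target are graded, respectively, as $\bigoplus_r M_{n-1}(r)$ and $\bigoplus_r F_e^{n} M_n(r)$, with the restrictions landing in matching summands, each restriction must itself be an isomorphism. The ``exceptional case'' where $F_e^{n} M_n(r) = 0$ is precisely the range in which $M_{n-1}(r) = 0$ by convention (i.e. $|r| > 2(n-1)$, which includes $r = \pm 2n$).

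The claim for $F_f^{n}$ follows by the identical argument, using the companion isomorphism $\mathcal{V}(n-1;\uline{\Sigma_R}) \xrightarrow{\sim} F_f^{n}\mathcal{V}(n;\uline{\Sigma})$ of Proposition \ref{prop:Viso}; the analogous map inserts $\widetilde{12}^z$ and $\widetilde{12}^w$ factors in the positions relevant to $f$, and these are again balanced in $\uline{1}$s and $\uline{2}$s. No substantive obstacle arises: the entire content of the lemma is that the isomorphisms of Proposition \ref{prop:Viso} are graded by number of $\uline{1}$s, which is immediate once one notes that each $\widetilde{12}^q$ is balanced.
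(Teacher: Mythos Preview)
Your argument is correct and rests on the same underlying idea as the paper's: use the explicit isomorphism $\theta$ of Proposition~\ref{prop:Viso} and observe that it is graded with respect to the number of $\uline{1}$s, so that it restricts to isomorphisms between the permutation summands. The paper's own proof is terser and, for the step upgrading the inclusion to an equality, cites Theorem~\ref{thm:Vistilting} together with exactness of $F_e^{n}$ and $F_f^{n}$ (Proposition~\ref{prop:Fexact}); your direct argument---that a bijection compatible with direct-sum decompositions on both sides must restrict to bijections on matching summands---avoids that appeal and is entirely self-contained.
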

\begin{proof}
It is immediate from Proposition \ref{prop:Viso} that $F_e^{n} M_n(r) \subseteq M_{n-1}(r)$ and that $F_f^{n} M_n(r) \subseteq M_{n-1}(r)$.  Equality follows from Theorem \ref{thm:Vistilting} and Proposition \ref{prop:Fexact}, specifically the fact that $F_e^{n}$ and $F_f^{n}$ are exact.
\end{proof}

\begin{lemma}  \label{lem:smallestpermutationistilting}
\begin{displaymath}
M_n(4n) = L_n(-n) = \Delta_n(-n) = \nabla_n(-n) = T_n(-n)
\end{displaymath}
\end{lemma}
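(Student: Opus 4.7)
The permutation module on the left of the asserted string of equalities is one-dimensional. Indeed, by Definition \ref{def:permutationmodule} its basis consists of the vectors $\uline{v}$ with $v \in \{1,2\}^{4n}$ of a prescribed $\uline{1}$-count, and the extremal case collapses to the single word $v_0 := \uline{1 1 \cdots 1}$ (all $\uline{1}$s). So the first step is simply to record that this module is spanned by the single vector $v_0$.

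Next I would compute the action of each element of $\mathcal{G}_n$ on $v_0$. By Theorem \ref{thm:rep}, every generator acts as a product of the operators $R^{q}_{i}$ of Definition \ref{def:Rmatrices}, and each such $R^{q}_{i}$ carries a factor $\delta'(\alpha_i,\alpha_{i+1})$ (or the cyclic analogue when $i=2n$) which vanishes whenever the two relevant tensor entries coincide. Since every entry of $v_0$ is $\uline{1}$, all these factors vanish, so $e\,v_0 = f\,v_0 = U_i\,v_0 = 0$ for every $i$. Hence $\sba$ acts on this module through the augmentation that kills every generator except the unit; in particular the module is simple.

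To identify which simple it is, I would appeal to the localisation functors. The previous computation gives $F_e^{n}$ and $F_f^{n}$ both killing the module. By Proposition \ref{prop:Fexact}, the simples killed by $F_e^{n}$ are precisely $L_n(-n)$ and $L_n(-(n-1))$, and the simples killed by $F_f^{n}$ are precisely $L_n(-n)$ and $L_n(n-1)$. The only simple killed by both is $L_n(-n)$, so the module is isomorphic to $L_n(-n)$. This is the one step that requires more than a bare observation; the cleanness of the kernels of the two localisations makes it essentially automatic, and I do not foresee a serious obstacle here.

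Finally, Proposition \ref{prop:sbafacts} tells us that $-n$ is the unique minimal element of the poset $\Lambda_n$, and its displayed dimension formula yields $\algdim_{\Field} \Delta_n(-n) = \binom{n}{0} = 1$. Hence $\Delta_n(-n)$ is itself simple and therefore equal to $L_n(-n)$. The contravariant self-duality of $\sba$ (noted immediately after Definition \ref{define:sbarelations}) then gives $\nabla_n(-n) = L_n(-n)$ as well. The indecomposable tilting $T_n(-n)$ admits both a $\Delta$-filtration beginning with $\Delta_n(-n)$ and a $\nabla$-filtration beginning with $\nabla_n(-n)$; since both of these standard and costandard modules are simple and equal to $L_n(-n)$, this compresses $T_n(-n)$ down to $L_n(-n)$ and completes the chain of equalities. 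No step is a genuine obstacle: the whole argument reduces to the fact that the generators act by zero on the all-ones vector, together with the standard collapse of $\Delta, \nabla, L, T$ at the minimum of a quasihereditary poset.
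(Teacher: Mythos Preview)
Your argument is correct and follows the paper's proof almost verbatim through the identification $M_n(4n)\cong L_n(-n)$: both use that the all-ones vector is annihilated by every $R^{q}_i$ and then invoke Proposition~\ref{prop:Fexact} to pin down which simple one has by looking at the common kernel of $F_e^{n}$ and $F_f^{n}$.

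The only genuine difference is in the final step. The paper obtains $\Delta_n(-n)=\nabla_n(-n)=T_n(-n)$ by citing that $M_n(4n)$ is already known to be tilting (as a direct summand of $\mathcal{V}(n)$, via Theorem~\ref{thm:Vistilting} and its Corollary), so a one-dimensional tilting module equal to $L_n(-n)$ must be $T_n(-n)$. You instead argue from general quasihereditary principles: $-n$ is minimal in $\Lambda_n$, so $\Delta_n(-n)=L_n(-n)=\nabla_n(-n)$ automatically, and then the $\Delta$-filtration of $T_n(-n)$ can only contain $\Delta_n(\mu)$ with $\mu\leq -n$, forcing $T_n(-n)=\Delta_n(-n)$. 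Your route is more self-contained --- it does not depend on the heavy Theorem~\ref{thm:Vistilting} --- while the paper's route is shorter given the surrounding machinery. Both are valid.
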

\begin{proof}
As $M_n(4n)$ is the module generated by $\uline{11\ldots1}$ and this vector is killed by all elements of $\mathcal{G}_n$ it follows that $M_n(n)$ is one-dimensional.  Therefore $M_n(4n)$ is simple.  Since $M_n(4n)$ is killed by both $F_e$ and $F_f$, it then follows by Proposition \ref{prop:Fexact} that $M_n(4n) \cong L_n(-n)$.  Since $M_n(4n)$ is tilting (by Lemma \ref{lem:FonMnr}) it follows also that $M_n(n) \cong \Delta_n(-n) \cong \nabla_n(-n) \cong T_n(-n)$.
\end{proof}

\begin{theorem} \label{thm:fulltilt}
Let $n \in \mathbb{N}$.  Let $\uline{\Pi} \in (\Field^{\times})^{6}$.

Then the module $\tensor[_{\sba(\uline{\Pi})}]{\mathcal{V}(n)}{}$ is full-tilting.
\end{theorem}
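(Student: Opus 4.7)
The plan is to argue by induction on $n$, combining the Krull-Schmidt decomposition of $\mathcal{V}(n)$ provided by Theorem \ref{thm:Vistilting} with the behaviour of the localisation functors $F_e$ and $F_f$ on indecomposable tilting modules. Since $\mathcal{V}(n)$ is tilting, I may write $\mathcal{V}(n) \cong \bigoplus_{\lambda \in \Lambda_n} T_n(\lambda)^{m_\lambda}$ for non-negative integers $m_\lambda$; the goal is to show $m_\lambda \geq 1$ for every $\lambda \in \Lambda_n$. The base case $n = 0$ is trivial, since $\sba[0] \cong \Field$ has only the trivial indecomposable tilting module, which $\mathcal{V}(0) = \Field$ automatically contains.

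For the inductive step, I first extract from Proposition \ref{prop:Fexact} the following standard consequence: because $F_e$ is exact and preserves both standard and costandard filtrations, $F_e T_n(\lambda)$ is a tilting $\sba[n-1]$-module, and identifying it via its (unique, multiplicity-one) highest standard factor gives $F_e T_n(\lambda) \cong T_{n-1}(-\lambda)$ whenever $-\lambda \in \Lambda_{n-1}$, while $F_e T_n(\lambda) = 0$ for $\lambda \in \{-n, -(n-1)\}$; the analogous statement for $F_f$ reads $F_f T_n(\lambda) \cong T_{n-1}(\lambda)$ for $\lambda \in \Lambda_{n-1}$ and zero otherwise. Applying $F_e$ to the Krull-Schmidt decomposition and invoking Proposition \ref{prop:Viso}, so that $F_e \mathcal{V}(n) \cong \mathcal{V}(n-1)$ at the rescaled parameters $\uline{\Sigma_L}$, the inductive hypothesis forces $m_{-\nu} \geq 1$ for every $\nu \in \Lambda_{n-1}$; that is, $m_\lambda \geq 1$ for every $\lambda \in -\Lambda_{n-1} = \{-(n-2), \ldots, n-1\}$. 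Repeating the argument with $F_f$ (at parameters $\uline{\Sigma_R}$) yields $m_\lambda \geq 1$ for every $\lambda \in \Lambda_{n-1}$, picking up in particular the otherwise missing weight $\lambda = -(n-1)$. Together these cover every $\lambda \in \Lambda_n$ except $\lambda = -n$. That remaining weight is handled directly: Proposition \ref{prop:permutationmodules} exhibits the permutation module indexed by the extremal value of $s$ (the one containing the vector $\uline{11 \ldots 1}$) as a direct summand of $\mathcal{V}(n)$, and Lemma \ref{lem:smallestpermutationistilting} identifies this summand as $T_n(-n)$.

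The only piece of work, rather than a genuine obstacle, is the justification of the claim about how $F_e$ and $F_f$ act on indecomposable tilting modules. This is standard quasihereditary-algebra folklore --- an exact functor preserving both the standard and costandard classes automatically preserves tilting modules, and the indecomposability of the image with the expected weight is forced by the multiplicity-one highest standard factor --- but warrants an explicit statement, especially since the parameters on the two sides of the isomorphisms in Proposition \ref{prop:Viso} are rescalings of one another, so the induction must be carried out uniformly over all admissible parameter specialisations.
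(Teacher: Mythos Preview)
Your argument is correct and follows essentially the same route as the paper's proof: induction on $n$, Krull--Schmidt decomposition of the tilting module $\mathcal{V}(n)$, use of both $F_e$ and $F_f$ together with $F_e\mathcal{V}(n)\cong F_f\mathcal{V}(n)\cong\mathcal{V}(n-1)$ to force $m_\lambda\geq 1$ for all $\lambda\in\Lambda_n\setminus\{-n\}$, and Lemma~\ref{lem:smallestpermutationistilting} for the remaining weight $-n$. Your added remarks---that the effect of $F_e$, $F_f$ on indecomposable tilting modules deserves an explicit justification, and that the induction must be run uniformly over all admissible parameter choices because of the parameter rescaling in Proposition~\ref{prop:Viso}---are worthwhile clarifications that the paper leaves implicit.
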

\begin{proof}
By induction on $n$.

By Theorem \ref{thm:Vistilting} $\mathcal{V}(n)$ is tilting. Therefore it is the sum of some indecomposable tilting modules:
\begin{align}
\mathcal{V}(n) = \bigoplus_{\lambda \in \Lambda_n} s_{\lambda}  T_n(\lambda) \rm{,}
\end{align}
for $s_{\lambda} \in \mathbb{N}$.

Observe that $\mathcal{V}(n)$ is full-tilting exactly if $s_{\lambda} > 0$ for every $\lambda \in \Lambda_n$.  Observe also that it follows from Proposition \ref{prop:Fexact} that for any $\lambda \in \Lambda_n$

\begin{align*}
F_{e} T_{n} (\lambda) &= \left\{ \begin{array}{cl}
   T_{n-1} (-\lambda )  & \mbox{if } -\lambda \in \Lambda_{n-1} \\
   0  & \mbox{otherwise}
 \end{array} \right.  \rm{,}
\end{align*}
and 
\begin{align*}
F_{f} T_{n} (\lambda) &= \left\{ \begin{array}{cl}
   T_{n-1} (\lambda )  & \mbox{if } \lambda \in \Lambda_{n-1} \\
   0  & \mbox{otherwise}
 \end{array} \right.  \rm{.}
\end{align*}

It follows therefore that
\begin{align}
F_e (\mathcal{V}(n)) &= F_e \left( \bigoplus_{\lambda \in \Lambda_n} s_{\lambda}  T_n(\lambda) \right) \\
  &= \bigoplus_{\lambda \in \Lambda_n} s_{\lambda}  F_e (T_n(\lambda)) \\
  &= \bigoplus_{\lambda \in \Lambda_{n-1}} s_{-\lambda} T_{n-1}(-\lambda) \rm{,} \\
F_f (\mathcal{V}(n)) &= F_f \left( \bigoplus_{\lambda \in \Lambda_n} s_{\lambda}  T_n(\lambda) \right) \\
 &= \bigoplus_{\lambda \in \Lambda_n} s_{\lambda}  F_f (T_n(\lambda)) \\
 &= \bigoplus_{\lambda \in \Lambda_{n-1}} s_{\lambda} T_{n-1}(\lambda) \rm{.}
\end{align}

Since $\mathcal{V}(n-1)$ is full-tilting, it follows that $s_{\lambda} > 0$ for $\lambda \in \Lambda_{-n} \setminus \{-n\}$.

But by Lemma \ref{lem:smallestpermutationistilting} $M_n(2n) \cong T_n(-n)$.  Therefore $s_{-n} \geq 1$.

Hence $\mathcal{V}(n)$ is full-tilting for all $n \in \mathbb{N}$, as claimed.
\end{proof}

\section{Discussion}

Let $\Parameters \in \Field^{\times}$.  Let $\uline{\Pi} = (\Parameters)$.  Let $n \in \mathbb{N}$.

\subsection{Further Multiplicities}

It is of some interest to compare the standard filtration of $\themodule$ with the direct sum decomposition into ``permutation modules''.  Several problems suggest themselves.

Theorem \ref{thm:Vistilting} gives an expression for $( \mathcal{V}(n) \; : \; \Delta_n(\lambda))$.  In fact, the sequence $\{v(r)\}_{r \in \mathbb{N}}$ given in Definition \ref{def:vsequence} can be described in terms of Chebyshev polynomials:
\begin{equation}
v(r) = \left\{ \begin{array}{cl}
                 1 & \mbox{if } r = 0 \\
                 14 U_{n-1}(8) & \mbox{if } r > 0
               \end{array}  \right.  \rm{,} 
\end{equation}
where $U_n(x)$ is a Chebyshev polynomial of the second kind (see for example Definition 1.2 of \cite{masonhandscomb}), given explicitly by
\begin{equation}
 U_n(x) = \frac{\left( x + \sqrt{x^2-1} \right)^{n+1} -  \left( x - \sqrt{x^2-1}\right)^{n+1}}{2\sqrt{x^2-1}} \rm{.}
\end{equation}

Is it possible to obtain a similarly explicit formula for $(M_n(r) \; : \: \Delta_n(\lambda))$?  Note that $M_n(r) \cong M_n(-r)$ for any $-2n \leq r \leq 2n$.  So $(M_n(r) \; : \: \Delta_n(\lambda)) = (M_n(-r) \; : \: \Delta_n(\lambda))$.  Since $F_e \mathcal{V}(n) \cong \mathcal{V}(n-1) \cong F_f \mathcal{V}(n)$, it must also be the case that $(M_n(r) \; : \: \Delta_n(\lambda)) = (M_n(r) \; : \: \Delta_n(-\lambda))$ whenever $\lambda, -\lambda \in \Lambda_n$, and that $(M_n(r) \; : \: \Delta_n(\lambda)) = (M_{n-1}(r) \; : \: \Delta_n(\lambda))$ whenever $-2(n-1) \leq r \leq 2(n-1)$.

The table below gives $(M_n(r) \; : \: \Delta_n(\lambda))$ for some small values of $n \in \mathbb{N}$.

\begin{displaymath}
\begin{array}{cc|ccccc}
r & |\lambda| & 0 & 1 & 2 & 3 & 4 \\
\hline
0 && 1 & 4 & 58 & 780 & 10906 \\
1 &&   & 4 & 48 & 676 & 9760 \\
2 &&   & 1 & 26 & 438 & 6966 \\
3 &&   &   &  8 & 204 & 3912 \\
4 &&   &   &  1 &  64 & 1686 \\
5 &&   &   &    &  12 & 536 \\
6 &&   &   &    &   1 & 118 \\
7 &&   &   &    &     & 16 \\ 
8 &&   &   &    &     & 1
\end{array}
\end{displaymath}

Compare with the similar table in Section 4.1 of \cite{martinryomhansen2004}.  That table gives similar multiplicities for ``permutation modules'' for the \emph{ordinary} blob algebra.  The ordinary blob algebras $\{b_n\}_{n \in \mathbb{N}}$ are a tower of algebras in the sense of \cite{coxmartinparkerxi2006}.  In particular for any $n \geq 1$ the algebra $b_{n-1}$ is isomorphic to a subalgebra of $b_n$ and therefore there is a restriction functor
\begin{displaymath}
\res : b_{n}-\amod \rightarrow b_{n-1}-\amod \rm{.}
\end{displaymath}

Using this functor the authors of \cite{martinryomhansen2004} give a recursion formula for the multiplicity of any standard module for the blob algebra as a factor of any permutation module.  The symplectic blob algebras $\{\sba\}_{n \in \mathbb{N}}$ are not a tower of algebras in this sense and at present we do not have even a candidate recursion formula for the multiplicities $(M_n(r) \; : \: \Delta_n(\lambda))$.

\subsection{Ringel Duals of $\sba$}

The algebra $\sba = \sba(\uline{\Pi})$ is quasihereditary, and the module $\themodule$ is a full tilting $\sba$-module.  Therefore $E_n := \End_{\sba}(\mathcal{V}(n))$ is also a quasihereditary algebra.  It is called the \emph{Ringel dual} of $\sba(\uline{\Pi})$.  

Note that the submodule $\mathcal{I}(n) \subset \mathcal{V}(n)$ defined by
\begin{displaymath}
\mathcal{I}(n) := \bigoplus_{0 \leq s \leq n} M_n(2s) \rm{,}
\end{displaymath}
is also a full-tilting module.  As such the algebra $\End_{\sba}(\mathcal{I}(n))$ is Morita equivalent to $E_n$.

One of the goals of the ``virtual algebraic Lie theory'' programme described in \cite{martinryomhansen2004} is to find an explicit presentation of such Ringel duals, ideally in a familiar (Lie-theoretic) setting.  An additional challenge to this programme in the case of the symplectic blob algebra is that the non-generic representation theory of $\sba$ is not at present fully understood.

\subsection*{Acknowledgements}

While writing this paper the author received funding from an EPSRC training grant. He would like to thank his supervisor, Paul Martin, for introducing him to this field and suggesting this problem, and for many useful discussions and suggestions. The author would also like to thank Elizabeth Banjo and Alison Parker, both of whom read and commented on previous drafts of this paper.

\bibliography{tilting}
\bibliographystyle{amsplain}

\end{document}